\newtheorem{thrm}{Theorem}
\newtheorem{lemma}{Lemma}
\begin{document}
\title[Necessary conditions for the extendibility of a 1-st order flex]{Necessary conditions 
for the extendibility of a first-order flex of a polyhedron to its flex}
\author{Victor Alexandrov}
\address{Sobolev Institute of Mathematics, Koptyug ave., 4, 
Novosibirsk, 630090, Russia and Department of Physics, 
Novosibirsk State University, Pirogov str., 2, Novosibirsk, 
630090, Russia}
\email{alex@math.nsc.ru}
\address{{}\hfill{November 2, 2019}}
\begin{abstract}
We derive fundamentally new equations that are satisfied by first-order flexes of a flexible polyhedron.
Moreover, we indicate two sources of such new equations.
These sources are the Dehn invariants and rigidity matrix.
The equations derived provide us with fundamentally new necessary conditions for the extendibility of 
a first-order flex of a polyhedron to its flex.
\par
\textit{Keywords}: Euclidean 3-space, flexible polyhedron, infinitesimal bending, Dehn invatiant, rigidity matrix.
\par
\textit{Mathematics subject classification (2010)}:  52C25. 	

\end{abstract}
\maketitle

\section{Introduction}\label{s1}

A polyhedron (more precisely, a compact polyhedral boundary-free surface in $\mathbb{R}^3$)
is called \textit{flexible} if its spatial form can be changed by a continuous deformation, 
in the course of which each face remains congruent to itself.
If some faces of a flexible polyhedron are not triangular, then one can triangulate them
in an arbitrary way and obtain a new flexible polyhedron with triangular faces only.
This is the reason why, in the theory of flexible polyhedra, it is customary to study
flexible polyhedra with triangular faces only.
Such a polyhedron is flexible, if and only if its spatial form can be
changed by a continuous motion of its vertices, during which the length of every edge remains
unaltered (so that the change in the spatial shape of the polyhedron results from the change of its dihedral angles only).

The first examples and properties of flexible polyhedra (more precisely, of flexible octahedra with self-intersections)
were discovered in the 19th century \cite{Br97}. 
Later flexible octahedra were studied in \cite{Be12} and \cite{Le67}.
However, the real flowering of the theory of flexible polyhedra began in the mid-1970s when
Robert Connelly constructed a sphere-homeomorphic self-intersection free flexible polyhedron \cite{Co77}.
Now we already know that any flexible polyhedron in Euclidean 3-space preserves the total mean curvature
(see \cite{Al85}), enclosed volume (there are especially many articles devoted to this issue, so we point out
several of them in chronological order: 
\cite{Sa95}, \cite{Sa96a}, \cite{Sa96b}, \cite{CSW97}, \cite{Sa98a}, \cite{Sa98b}, \cite{Ga14a}, and \cite{Ga14b}),
and Dehn invariants (see \cite{GI18}).
The reader can find more details about the theory of flexible polyhedra in the above mentioned
articles, as well as in the following review articles, which we list in chronological order:
\cite{Ku79}, \cite{Co80}, \cite{Sc04}, \cite{Sa11}, and \cite{Ga18}.

However, there are many intriguing open problems in the theory of flexible polyhedra.
One of them reads as follows: given a polyhedron, recognize whether it is flexible or not.
One approach to this problem is to analyze the infinitesimal flexes of the given polyhedron.

Recall that an infinitesimal flex (more precisely, an \textit{$n$th order flex} for some $n\geqslant 1$) 
of a polyhedron with triangular faces
is a continuous motion of the vertices (depending on some parameter $t$, which can be
interpreted as time, with the initial polyhedron corresponding to $t=0$)
that changes every edge length by a value which is $o(t^n)$ as $t\to 0$.
An $n$th order flex can be identified with the set of the derivatives of the position vectors of the vertices 
of orders $1,\dots, n$ of the initial polyhedron. 
An $n$th order flex satisfies some well-known equations.
Many geometers studied conditions under which an $n$th order flex can be extended to an $(n+1)$th order flex or to a flex.
The reader can find more details about the infinitesimal flexes of polyhedra and smooth surfaces,
for example, in the classic article \cite{Ef52}, review articles \cite{Co93}, \cite{IS95} and \cite{IMS8},
as well as in the book \cite{Kl14}.

In the present article, we derive fundamentally new equations that are satisfied by first-order flexes of a flexible polyhedron.
Moreover, we indicate two sources of such new equations.
Namely in Section \ref{s3}, we derive the desired equations using the fact that the Dehn invariants remain unaltered 
during the flex, a fact that was established for the first time by A.A. Gaifullin and L.S. Ignashchenko \cite{GI18}.
In Section \ref{s4}, we derive another set of desired equations using the notion of the rigidity matrix.
The equations derived provide us with fundamentally new necessary conditions for the extendibility of 
a first-order flex of a polyhedron to its flex.

Finally, we recall that, in \cite{Al98}, sufficient conditions are found for the extendibility of an $n$th order flex
of a polyhedron, $n\geqslant 1$, to its flex.
Those conditions are based on absolutely different ideas than those used in the present article.

\section{New equations for the velocity vectors of the vertices of a flexible polyhedron, generated by the Dehn invariants}\label{s2}

Let $P_0$ be an oriented polyhedron in $\mathbb{R}^3$ with non-degenerate triangular faces
(the latter means that no straight line contains all three vertices of a face).
Moreover, suppose that $P_0$ is a member of a continuous in $t$ family $\{ P_t \}_{t\in [0,1]}$
of polyhedra in $\mathbb {R}^3$ such that, for each $t\in (0,1]$, 
the corresponding (by continuity) edges of $P_t$ and $P_0$ have the same length.
If $P_t$ and $P_0$ are not congruent to each other for each $t\in (0,1]$ then the family
$\{P_t \}_{t\in [0,1]}$ is called a \textit{flex} of $P_0$ and $P_0$ is called \textit{flexible}.
$P_0$ is called \textit{rigid} if no flex of $P_0$ does exist. 

For each edge $\sigma$ of $P_0$ denote 

$\bullet$ by $\boldsymbol{x}_{\sigma}$ and $\boldsymbol{y}_{\sigma}$ 
the end-points of $\sigma$; 

$\bullet$ by $\boldsymbol{z}_{\sigma}'$ and $\boldsymbol{z}_{\sigma}''$ 
the vertices of $P_0$, such that the triples  
$\boldsymbol{x}_{\sigma}$, $\boldsymbol{y}_{\sigma}$, $\boldsymbol{z}_{\sigma}'$ and
$\boldsymbol{x}_{\sigma}$, $\boldsymbol{y}_{\sigma}$, $\boldsymbol{z}_{\sigma}''$ 
form faces of $P_0$;

$\bullet$ by $\ell_{\sigma}$, $\ell'_{\sigma}$, and $\ell''_{\sigma}$ the lengths of the vectors 
$\boldsymbol{y}_{\sigma}-\boldsymbol{x}_{\sigma}$, $\boldsymbol{z}_{\sigma}'-\boldsymbol{x}_{\sigma}$,
and $\boldsymbol{z}_{\sigma}''-\boldsymbol{x}_{\sigma}$ respectively;

$\bullet$ by $A'_{\sigma}$ and $A''_{\sigma}$ the doubled areas 
of the faces of $P_0$ with the vertices 
$\boldsymbol{x}_{\sigma}$, $\boldsymbol{y}_{\sigma}$, $\boldsymbol{z}_{\sigma}'$ and
$\boldsymbol{x}_{\sigma}$, $\boldsymbol{y}_{\sigma}$, $\boldsymbol{z}_{\sigma}''$ respectively
(equivalently, the lengths of the vectors 
$(\boldsymbol{y}_{\sigma}-\boldsymbol{x}_{\sigma})\times (\boldsymbol{z}_{\sigma}'-\boldsymbol{x}_{\sigma})$ 
and 
$(\boldsymbol{y}_{\sigma}-\boldsymbol{x}_{\sigma})\times (\boldsymbol{z}_{\sigma}''-\boldsymbol{x}_{\sigma})$
respectively);

$\bullet$ by $\varphi_{\sigma}$ the internal dihedral angle of the polyhedron $P_0$ at the edge $\sigma$.

Moreover, suppose that the notation for the vertices $\boldsymbol{x}_{\sigma}$ and $\boldsymbol{y}_{\sigma}$ 
is chosen so that the unit vectors 
\begin{equation}\label{eq1}
\boldsymbol{n}_{\sigma}'=\frac{(\boldsymbol{y}_{\sigma}-\boldsymbol{x}_{\sigma})\times 
(\boldsymbol{z}_{\sigma}'-\boldsymbol{x}_{\sigma})}{A'_{\sigma}}
\quad\text{and}\quad
\boldsymbol{n}_{\sigma}''=\frac{(\boldsymbol{z}_{\sigma}''-\boldsymbol{x}_{\sigma}) \times 
(\boldsymbol{y}_{\sigma}-\boldsymbol{x}_{\sigma})}{A''_{\sigma}}
\end{equation}
are the vectors of outward normals to the faces with the vertices
$\boldsymbol{x}_{\sigma}$, $\boldsymbol{y}_{\sigma}$, $\boldsymbol{z}_{\sigma}'$ 
and $\boldsymbol{x}_{\sigma}$, $\boldsymbol{y}_{\sigma}$, $\boldsymbol{z}_{\sigma}''$ 
respectively (see Fig.~\ref{fig1}).
\begin{figure}
\begin{center}
\includegraphics[width=0.25\textwidth]{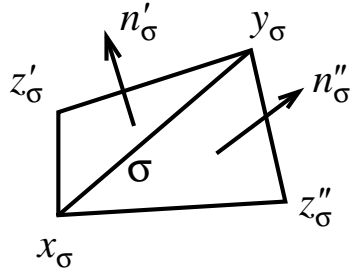}
\end{center}
\caption{Edge $\sigma$, the adjacent faces of $P_0$, and outward normals  
$\boldsymbol{n}_{\sigma}'$ and  $\boldsymbol{n}_{\sigma}''$ to them.}\label{fig1}
\end{figure}
Here and below $\boldsymbol{a}\times \boldsymbol{b}$ stands for the cross product of 
the vectors $\boldsymbol{a}$ and $\boldsymbol{b}$.

The above agreement that $\boldsymbol{n}_{\sigma}'$ and $\boldsymbol{n}_{\sigma}''$, 
defined by the formulas (\ref{eq1}), are outward normals to the corresponding faces of
$P_0$, does not limit the generality of reasoning.
In fact, in the case of violation of this agreement, it is sufficient to swap the notation for
$\boldsymbol{x}_{\sigma}$ and $\boldsymbol{y}_{\sigma}$.
After that, the vectors $\boldsymbol{n}_{\sigma}'$ and $\boldsymbol{n}_{\sigma}''$
will be replaced by $-\boldsymbol{n}_{\sigma}'$ and $-\boldsymbol{n}_{\sigma}''$,
respectively, and the agreement will be fulfilled.

For short, mainly in the proofs, we use the following notation:
$\boldsymbol{w}=\boldsymbol{y}_{\sigma}-\boldsymbol{x}_{\sigma}$, 
$\boldsymbol{w'}=\boldsymbol{z}_{\sigma}'-\boldsymbol{x}_{\sigma}$, and
$\boldsymbol{w''}=\boldsymbol{z}_{\sigma}''-\boldsymbol{x}_{\sigma}$.

\begin{lemma}\label{l1}
Under the above notation, for every edge $\sigma$ of $P_0$, the equalities hold true
\begin{eqnarray}
A'_{\sigma}A''_{\sigma}\cos\varphi_{\sigma} & = & 
\ell^2_{\sigma}(\boldsymbol{z}_{\sigma}'-\boldsymbol{x}_{\sigma})\cdot(\boldsymbol{z}_{\sigma}''-\boldsymbol{x}_{\sigma})
\nonumber \\ & & 
\hphantom{a}-\bigl[(\boldsymbol{z}_{\sigma}'-\boldsymbol{x}_{\sigma})\cdot (\boldsymbol{y}_{\sigma}-\boldsymbol{x}_{\sigma})\bigr]
\bigl[(\boldsymbol{z}_{\sigma}''-\boldsymbol{x}_{\sigma})\cdot (\boldsymbol{y}_{\sigma}-\boldsymbol{x}_{\sigma})\bigr],
\label{eq2} \\
A'_{\sigma}A''_{\sigma}\sin\varphi_{\sigma} & = &
- \ell_{\sigma}(\boldsymbol{y}_{\sigma}-\boldsymbol{x}_{\sigma})\cdot \bigl[(\boldsymbol{z}_{\sigma}'-\boldsymbol{x}_{\sigma})
\times (\boldsymbol{z}_{\sigma}''-\boldsymbol{x}_{\sigma})\bigr].  \label{eq3}
\end{eqnarray}
\textnormal{Here and below $\boldsymbol{a}\cdot \boldsymbol{b}$ 
stands for the scalar product of the vectors $\boldsymbol{a}$ and $\boldsymbol{b}$.}
\end{lemma}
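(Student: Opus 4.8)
The plan is to compute the quantity $\boldsymbol{n}_{\sigma}'\cdot\boldsymbol{n}_{\sigma}''$ and $\boldsymbol{n}_{\sigma}'\times\boldsymbol{n}_{\sigma}''$ directly from the definitions in \eqref{eq1}, and then to interpret the results geometrically. Since $\boldsymbol{n}_{\sigma}'$ and $\boldsymbol{n}_{\sigma}''$ are the outward unit normals to the two faces meeting along $\sigma$, the angle between them is the exterior dihedral angle, so $\boldsymbol{n}_{\sigma}'\cdot\boldsymbol{n}_{\sigma}''=-\cos\varphi_{\sigma}$ and $|\boldsymbol{n}_{\sigma}'\times\boldsymbol{n}_{\sigma}''|=\sin\varphi_{\sigma}$; more precisely, $\boldsymbol{n}_{\sigma}'\times\boldsymbol{n}_{\sigma}''$ is parallel to the edge direction $\boldsymbol{w}=\boldsymbol{y}_\sigma-\boldsymbol{x}_\sigma$, and a check of orientations (using that both normals are \emph{outward}) shows $\boldsymbol{n}_{\sigma}'\times\boldsymbol{n}_{\sigma}''=(\sin\varphi_\sigma)\,\boldsymbol{w}/\ell_\sigma$ with the sign as written in \eqref{eq3}.

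For \eqref{eq2}: using the abbreviations $\boldsymbol{w},\boldsymbol{w'},\boldsymbol{w''}$, we have $A'_\sigma\boldsymbol{n}_{\sigma}'=\boldsymbol{w}\times\boldsymbol{w'}$ and $A''_\sigma\boldsymbol{n}_{\sigma}''=\boldsymbol{w''}\times\boldsymbol{w}$, hence
\[
A'_\sigma A''_\sigma\,\boldsymbol{n}_{\sigma}'\cdot\boldsymbol{n}_{\sigma}''=(\boldsymbol{w}\times\boldsymbol{w'})\cdot(\boldsymbol{w''}\times\boldsymbol{w}).
\]
Applying the Binet--Cauchy identity $(\boldsymbol{a}\times\boldsymbol{b})\cdot(\boldsymbol{c}\times\boldsymbol{d})=(\boldsymbol{a}\cdot\boldsymbol{c})(\boldsymbol{b}\cdot\boldsymbol{d})-(\boldsymbol{a}\cdot\boldsymbol{d})(\boldsymbol{b}\cdot\boldsymbol{c})$ with $\boldsymbol{a}=\boldsymbol{w}$, $\boldsymbol{b}=\boldsymbol{w'}$, $\boldsymbol{c}=\boldsymbol{w''}$, $\boldsymbol{d}=\boldsymbol{w}$ gives $(\boldsymbol{w}\cdot\boldsymbol{w''})(\boldsymbol{w'}\cdot\boldsymbol{w})-(\boldsymbol{w}\cdot\boldsymbol{w})(\boldsymbol{w'}\cdot\boldsymbol{w''})=-\ell_\sigma^2(\boldsymbol{w'}\cdot\boldsymbol{w''})+(\boldsymbol{w'}\cdot\boldsymbol{w})(\boldsymbol{w''}\cdot\boldsymbol{w})$. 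Comparing with the right-hand side of \eqref{eq2} and using $\boldsymbol{n}_{\sigma}'\cdot\boldsymbol{n}_{\sigma}''=-\cos\varphi_\sigma$ yields \eqref{eq2} after multiplying by $-1$.

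For \eqref{eq3}: from $A'_\sigma A''_\sigma(\boldsymbol{n}_{\sigma}'\times\boldsymbol{n}_{\sigma}'')=(\boldsymbol{w}\times\boldsymbol{w'})\times(\boldsymbol{w''}\times\boldsymbol{w})$, I expand the triple cross product by the identity $(\boldsymbol{p})\times(\boldsymbol{w''}\times\boldsymbol{w})=(\boldsymbol{p}\cdot\boldsymbol{w})\boldsymbol{w''}-(\boldsymbol{p}\cdot\boldsymbol{w''})\boldsymbol{w}$ with $\boldsymbol{p}=\boldsymbol{w}\times\boldsymbol{w'}$; since $(\boldsymbol{w}\times\boldsymbol{w'})\cdot\boldsymbol{w}=0$ the first term drops, leaving $-\bigl[(\boldsymbol{w}\times\boldsymbol{w'})\cdot\boldsymbol{w''}\bigr]\boldsymbol{w}=-\bigl[\boldsymbol{w}\cdot(\boldsymbol{w'}\times\boldsymbol{w''})\bigr]\boldsymbol{w}$. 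Dotting with $\boldsymbol{w}/\ell_\sigma$ and using $\boldsymbol{n}_{\sigma}'\times\boldsymbol{n}_{\sigma}''=(\sin\varphi_\sigma)\boldsymbol{w}/\ell_\sigma$ gives $A'_\sigma A''_\sigma\sin\varphi_\sigma=-\ell_\sigma\,\boldsymbol{w}\cdot(\boldsymbol{w'}\times\boldsymbol{w''})$, which is \eqref{eq3}.

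The routine vector-algebra manipulations are entirely mechanical; the only genuinely delicate point is the bookkeeping of signs and orientations — confirming that with the convention fixed in \eqref{eq1} (both $\boldsymbol{n}_{\sigma}'$ and $\boldsymbol{n}_{\sigma}''$ outward), the scalar triple product $\boldsymbol{w}\cdot(\boldsymbol{w'}\times\boldsymbol{w''})$ carries precisely the sign that makes $\sin\varphi_\sigma$ come out with the correct sign for a convex versus a reflex dihedral angle. I would verify this on the standard reference configuration (e.g.\ the two faces making a convex edge, $0<\varphi_\sigma<\pi$), where $\boldsymbol{w}\cdot(\boldsymbol{w'}\times\boldsymbol{w''})<0$, so that $-\ell_\sigma\,\boldsymbol{w}\cdot(\boldsymbol{w'}\times\boldsymbol{w''})>0$ matches $\sin\varphi_\sigma>0$; the general case then follows by continuity.
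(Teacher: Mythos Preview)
Your proof is correct and follows essentially the same route as the paper: both derive \eqref{eq2} from $\cos\varphi_\sigma=-\boldsymbol{n}_\sigma'\cdot\boldsymbol{n}_\sigma''$ together with the Binet--Cauchy identity, and \eqref{eq3} from $\boldsymbol{n}_\sigma'\times\boldsymbol{n}_\sigma''=(\sin\varphi_\sigma/\ell_\sigma)\,\boldsymbol{w}$ together with the triple cross-product expansion. Your treatment is in fact slightly more careful than the paper's in that you explicitly flag and verify the sign convention for $\boldsymbol{n}_\sigma'\times\boldsymbol{n}_\sigma''$, which the paper simply asserts.
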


\begin{proof}
The formula (\ref{eq2}) follows from the equality
$\cos\varphi_{\sigma} = - \boldsymbol{n}_{\sigma}'\cdot \boldsymbol{n}_{\sigma}''$ 
and the well-known identity
$(\boldsymbol{a}\times \boldsymbol{b})\cdot (\boldsymbol{c}\times \boldsymbol{d}) 
= (\boldsymbol{a}\cdot \boldsymbol{c})(\boldsymbol{b}\cdot \boldsymbol{d})
-(\boldsymbol{a}\cdot \boldsymbol{d})(\boldsymbol{b}\cdot \boldsymbol{c})$.
Indeed,
\begin{eqnarray*}
\cos\varphi_{\sigma} & = & -\frac{\boldsymbol{w}\times \boldsymbol{w'}}
{A'_{\sigma}}\times 
\frac{\boldsymbol{w''}\times \boldsymbol{w}}{A''_{\sigma}} \\
& = & -\frac{1}{A'_{\sigma}A''_{\sigma}}
\bigl[(\boldsymbol{w}\cdot \boldsymbol{w''})(\boldsymbol{w'}\cdot \boldsymbol{w})
-(\boldsymbol{w'}\cdot \boldsymbol{w''})(\boldsymbol{w}\cdot \boldsymbol{w})\bigr] \\
& = &
\frac{1}{A'_{\sigma}A''_{\sigma}}\bigl[\ell^2_{\sigma}
(\boldsymbol{w'}\cdot \boldsymbol{w''})-(\boldsymbol{w}\cdot \boldsymbol{w''})(\boldsymbol{w'}\cdot \boldsymbol{w})\bigr].
\end{eqnarray*}

The formula (\ref{eq3}) follows from the equality
\begin{equation*}
\boldsymbol{n}_{\sigma}'\times \boldsymbol{n}_{\sigma}'' = \frac{\sin\varphi_{\sigma}}{\ell_{\sigma}}\boldsymbol{w}
\end{equation*}
and the well-known identity
$(\boldsymbol{a}\times \boldsymbol{b})\times (\boldsymbol{c}\times \boldsymbol{d}) 
= \bigl[(\boldsymbol{a}\times \boldsymbol{b})\cdot \boldsymbol{d}\bigr]\boldsymbol{c} 
- \bigl[(\boldsymbol{a}\times \boldsymbol{b})\cdot \boldsymbol{c}\bigr]\boldsymbol{d}$.
Indeed,
\begin{eqnarray*}
\sin\varphi_{\sigma} & = & \frac{\ell_{\sigma}}{\boldsymbol{w}\cdot \boldsymbol{w}} \boldsymbol{w}
\cdot (\boldsymbol{n}_{\sigma}'\times \boldsymbol{n}_{\sigma}'') =
\frac{1}{\ell_{\sigma}} \boldsymbol{w}\cdot 
\biggl(
\frac{\boldsymbol{w}\times \boldsymbol{w'}}{A'_{\sigma}} \times 
\frac{\boldsymbol{w''}\times \boldsymbol{w}}{A''_{\sigma}} 
\biggr) \\
& = & \frac{1}{\ell_{\sigma}A'_{\sigma}A''_{\sigma}} \boldsymbol{w}
\cdot \bigl\{\bigl[(\boldsymbol{w}\times \boldsymbol{w'})\cdot \boldsymbol{w}\bigr]\boldsymbol{w''}-
\bigl[(\boldsymbol{w}\times \boldsymbol{w'})\cdot \boldsymbol{w''}\bigr]\boldsymbol{w}\bigr\} \\
& = & -\frac{\boldsymbol{w}\cdot \boldsymbol{w}}{\ell_{\sigma}A'_{\sigma}A''_{\sigma}}
(\boldsymbol{w'}\times \boldsymbol{w''})\cdot \boldsymbol{w} =
-\frac{\ell_{\sigma}}{A'_{\sigma}A''_{\sigma}}\boldsymbol{w}\cdot (\boldsymbol{w'}\times \boldsymbol{w''}). 
\end{eqnarray*}
\end{proof}

Since $\{P_t \}_{t\in [0,1]}$ is a flex of $P_0$,
for every $t\in [0,1]$ and every edge $\sigma$ of $P_0$, there is an edge $\sigma (t)$ of $P_t$ which corresponds to
$\sigma$ by continuity (in particular, $\sigma (0)=\sigma$).
Similarly, for the dihedral angle $\varphi_{\sigma}$ and vectors
$\boldsymbol{x}_{\sigma}$, $\boldsymbol{y}_{\sigma}$, $\boldsymbol{z}_{\sigma}'$, 
$\boldsymbol{z}_{\sigma}''$, $\boldsymbol{w}=\boldsymbol{y}_{\sigma}-\boldsymbol{x}_{\sigma}$, 
$\boldsymbol{w'}=\boldsymbol{z}_{\sigma}'-\boldsymbol{x}_{\sigma}$, and
$\boldsymbol{w''}=\boldsymbol{z}_{\sigma}''-\boldsymbol{x}_{\sigma}$, 
associated with the edge $\sigma$ of $P_0$, there are similar quantities which correspond
to the edge $\sigma (t)$ of $P_t$.
Denote them by
$\varphi_{\sigma}(t)$, $\boldsymbol{x}_{\sigma}(t)$, $\boldsymbol{y}_{\sigma}(t)$, $\boldsymbol{z}_{\sigma}'(t)$, 
$\boldsymbol{z}_{\sigma}''(t)$, $\boldsymbol{w}(t)=\boldsymbol{y}_{\sigma}(t)-\boldsymbol{x}_{\sigma}(t)$, 
$\boldsymbol{w'}(t)=\boldsymbol{z}_{\sigma}'(t)-\boldsymbol{x}_{\sigma}(t)$, and
$\boldsymbol{w''}(t)=\boldsymbol{z}_{\sigma}''(t)-\boldsymbol{x}_{\sigma}(t)$ 
respectively.
Denote the derivatives of these quantities with respect to $t$ calculated at $t=0$ by
$\Phi_{\sigma}$, $\boldsymbol{\xi}_{\sigma}$, $\boldsymbol{\eta}_{\sigma}$, $\boldsymbol{\zeta}_{\sigma}'$, 
$\boldsymbol{\zeta}_{\sigma}''$, $\boldsymbol{W}=\boldsymbol{\eta}_{\sigma}-\boldsymbol{\xi}_{\sigma}$, 
$\boldsymbol{W'}=\boldsymbol{\zeta}_{\sigma}'-\boldsymbol{\xi}_{\sigma}$, and 
$\boldsymbol{W''}=\boldsymbol{\zeta}_{\sigma}''-\boldsymbol{\xi}_{\sigma}$ 
respectively, i.\,e., put by definition
\begin{equation*}
\Phi_{\sigma}=\frac{d}{dt}\biggl|_{t=0}\varphi_{\sigma}(t),\quad
\boldsymbol{\xi}_{\sigma}=\frac{d}{dt}\biggl|_{t=0}\boldsymbol{x}_{\sigma}(t),\  \dots , \ 
\boldsymbol{W''}=\frac{d}{dt}\biggl|_{t=0}\boldsymbol{w''}(t)=\boldsymbol{\zeta}_{\sigma}''-\boldsymbol{\xi}_{\sigma}.
\end{equation*}

Note that 

$\bullet$ the length of the edge $\sigma(t)$ of $P_t$ is independent of $t$ and is equal to $\ell_{\sigma}$;

$\bullet$ the areas of the faces of $P_t$ with the vertices 
$\boldsymbol{x}_{\sigma}(t)$, $\boldsymbol{y}_{\sigma}(t)$, $\boldsymbol{z}_{\sigma}'(t)$ and
$\boldsymbol{x}_{\sigma}(t)$, $\boldsymbol{y}_{\sigma}(t)$, $\boldsymbol{z}_{\sigma}''(t)$ 
are also indepentent of $t$ and are equal to the areas of the faces of $P_0$ with the vertices
$\boldsymbol{x}_{\sigma}$, $\boldsymbol{y}_{\sigma}$, $\boldsymbol{z}_{\sigma}'$ and
$\boldsymbol{x}_{\sigma}$, $\boldsymbol{y}_{\sigma}$, $\boldsymbol{z}_{\sigma}''$ respectively
(equivalently, are equal to $\frac12 A'_{\sigma}$ and $\frac12 A''_{\sigma}$ respectively).

\begin{lemma}\label{l2}
Under the above notation, for every edge $\sigma$ of $P_0$, the equalities hold true
\begin{eqnarray}
(A'_{\sigma} A''_{\sigma}\sin\varphi_{\sigma})\Phi_{\sigma} & = &
\boldsymbol{p}_{\sigma}\cdot (\boldsymbol{\eta}_{\sigma}-\boldsymbol{\xi}_{\sigma})+ 
\boldsymbol{p}_{\sigma}'\cdot (\boldsymbol{\zeta}_{\sigma}'-\boldsymbol{\xi}_{\sigma}) +
\boldsymbol{p}_{\sigma}''\cdot (\boldsymbol{\zeta}_{\sigma}''-\boldsymbol{\xi}_{\sigma}), \label{eq4} \\
(A'_{\sigma}A''_{\sigma}\cos\varphi_{\sigma})\Phi_{\sigma} & = &
\boldsymbol{q}_{\sigma}\cdot (\boldsymbol{\eta}_{\sigma}-\boldsymbol{\xi}_{\sigma})+ 
\boldsymbol{q}_{\sigma}'\cdot (\boldsymbol{\zeta}_{\sigma}'-\boldsymbol{\xi}_{\sigma}) +
\boldsymbol{q}_{\sigma}''\cdot (\boldsymbol{\zeta}_{\sigma}''-\boldsymbol{\xi}_{\sigma}),  \label{eq5}
\end{eqnarray}
where
\begin{eqnarray*}
\boldsymbol{p}_{\sigma} & = & \bigl[(\boldsymbol{y}_{\sigma} - \boldsymbol{x}_{\sigma})\cdot 
(\boldsymbol{z}_{\sigma}'' - \boldsymbol{x}_{\sigma})\bigr](\boldsymbol{z}_{\sigma}' - \boldsymbol{x}_{\sigma})
+ \bigl[(\boldsymbol{y}_{\sigma} - \boldsymbol{x}_{\sigma})\cdot 
(\boldsymbol{z}_{\sigma}' - \boldsymbol{x}_{\sigma})\bigr](\boldsymbol{z}_{\sigma}'' - \boldsymbol{x}_{\sigma}), \\
\boldsymbol{p}_{\sigma}' & = & \bigl[(\boldsymbol{y}_{\sigma} - \boldsymbol{x}_{\sigma})\cdot 
(\boldsymbol{z}_{\sigma}'' - \boldsymbol{x}_{\sigma})\bigr](\boldsymbol{y}_{\sigma} - \boldsymbol{x}_{\sigma})
- \ell^2_{\sigma}(\boldsymbol{z}_{\sigma}'' - \boldsymbol{x}_{\sigma}), \\
\boldsymbol{p}_{\sigma}'' & = & \bigl[(\boldsymbol{y}_{\sigma} - \boldsymbol{x}_{\sigma})\cdot 
(\boldsymbol{z}_{\sigma}' - \boldsymbol{x}_{\sigma})\bigr](\boldsymbol{y}_{\sigma} - \boldsymbol{x}_{\sigma})
- \ell^2_{\sigma}(\boldsymbol{z}_{\sigma}' - \boldsymbol{x}_{\sigma}), \\
\boldsymbol{q}_{\sigma} & = & - \ell_{\sigma} (\boldsymbol{z}_{\sigma}' - \boldsymbol{x}_{\sigma})\times 
(\boldsymbol{z}_{\sigma}'' - \boldsymbol{x}_{\sigma}), \\
\boldsymbol{q}_{\sigma}' & = & \ell_{\sigma} (\boldsymbol{y}_{\sigma} - \boldsymbol{x}_{\sigma})\times 
(\boldsymbol{z}_{\sigma}'' - \boldsymbol{x}_{\sigma}), \\
\boldsymbol{q}_{\sigma}'' & = & - \ell_{\sigma} (\boldsymbol{y}_{\sigma} - \boldsymbol{x}_{\sigma})\times 
(\boldsymbol{z}_{\sigma}' - \boldsymbol{x}_{\sigma}).
\end{eqnarray*}
\end{lemma}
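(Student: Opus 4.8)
The plan is to obtain (\ref{eq4}) and (\ref{eq5}) by differentiating, at $t=0$, the two closed-form identities of Lemma~\ref{l1} written for the polyhedron $P_t$ in place of $P_0$. For every $t\in[0,1]$ the polyhedron $P_t$ still has non-degenerate triangular faces (the shape of each face is unchanged along the flex), so Lemma~\ref{l1} applies to $P_t$ verbatim, with $\boldsymbol{x}_\sigma,\boldsymbol{y}_\sigma,\boldsymbol{z}'_\sigma,\boldsymbol{z}''_\sigma,\varphi_\sigma$ replaced by $\boldsymbol{x}_\sigma(t),\boldsymbol{y}_\sigma(t),\boldsymbol{z}'_\sigma(t),\boldsymbol{z}''_\sigma(t),\varphi_\sigma(t)$ and $\boldsymbol{w},\boldsymbol{w}',\boldsymbol{w}''$ by $\boldsymbol{w}(t),\boldsymbol{w}'(t),\boldsymbol{w}''(t)$. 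Crucially, as observed just before the statement of Lemma~\ref{l2}, the scalars $\ell_\sigma$, $A'_\sigma$, $A''_\sigma$ do not depend on $t$; hence on the left-hand sides of (\ref{eq2}) and (\ref{eq3}) the only $t$-dependence is through $\cos\varphi_\sigma(t)$ and $\sin\varphi_\sigma(t)$, whose derivatives at $t=0$ are $-(\sin\varphi_\sigma)\Phi_\sigma$ and $(\cos\varphi_\sigma)\Phi_\sigma$. Differentiability at $t=0$ of all the quantities involved is part of the hypotheses, since $\Phi_\sigma,\boldsymbol{\xi}_\sigma,\dots,\boldsymbol{W}''$ are defined to be exactly those derivatives.

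To get (\ref{eq5}) I would differentiate (\ref{eq3}). Its right-hand side $-\ell_\sigma\,\boldsymbol{w}(t)\cdot(\boldsymbol{w}'(t)\times\boldsymbol{w}''(t))$ is a scalar triple product, so by the Leibniz rule its derivative at $t=0$ is $-\ell_\sigma[\boldsymbol{W}\cdot(\boldsymbol{w}'\times\boldsymbol{w}'')+\boldsymbol{w}\cdot(\boldsymbol{W}'\times\boldsymbol{w}'')+\boldsymbol{w}\cdot(\boldsymbol{w}'\times\boldsymbol{W}'')]$. Using the cyclic invariance $\boldsymbol{a}\cdot(\boldsymbol{b}\times\boldsymbol{c})=\boldsymbol{b}\cdot(\boldsymbol{c}\times\boldsymbol{a})=\boldsymbol{c}\cdot(\boldsymbol{a}\times\boldsymbol{b})$, I rewrite the second summand as $(\boldsymbol{w}''\times\boldsymbol{w})\cdot\boldsymbol{W}'$ and the third as $(\boldsymbol{w}\times\boldsymbol{w}')\cdot\boldsymbol{W}''$. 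Comparing the resulting coefficients of $\boldsymbol{W}$, $\boldsymbol{W}'$, $\boldsymbol{W}''$ with the definitions of $\boldsymbol{q}_\sigma$, $\boldsymbol{q}'_\sigma$, $\boldsymbol{q}''_\sigma$ (and recalling $\boldsymbol{w}=\boldsymbol{y}_\sigma-\boldsymbol{x}_\sigma$, $\boldsymbol{w}'=\boldsymbol{z}'_\sigma-\boldsymbol{x}_\sigma$, $\boldsymbol{w}''=\boldsymbol{z}''_\sigma-\boldsymbol{x}_\sigma$, $\boldsymbol{W}=\boldsymbol{\eta}_\sigma-\boldsymbol{\xi}_\sigma$, $\boldsymbol{W}'=\boldsymbol{\zeta}'_\sigma-\boldsymbol{\xi}_\sigma$, $\boldsymbol{W}''=\boldsymbol{\zeta}''_\sigma-\boldsymbol{\xi}_\sigma$) reproduces the right-hand side of (\ref{eq5}); its left-hand side is $(A'_\sigma A''_\sigma\cos\varphi_\sigma)\Phi_\sigma$ by the previous paragraph.

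To get (\ref{eq4}) I would differentiate (\ref{eq2}). Its right-hand side $\ell_\sigma^2(\boldsymbol{w}'(t)\cdot\boldsymbol{w}''(t))-(\boldsymbol{w}'(t)\cdot\boldsymbol{w}(t))(\boldsymbol{w}''(t)\cdot\boldsymbol{w}(t))$ is a combination of dot products, so the Leibniz rule produces six terms at $t=0$. Since differentiating the left-hand side gives $-(A'_\sigma A''_\sigma\sin\varphi_\sigma)\Phi_\sigma$, I move that minus sign onto the right-hand side and collect the six terms according to which of $\boldsymbol{W}$, $\boldsymbol{W}'$, $\boldsymbol{W}''$ they pair against, using $\boldsymbol{w}\cdot\boldsymbol{w}=\ell_\sigma^2$. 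The coefficient of $\boldsymbol{W}$ becomes $(\boldsymbol{w}\cdot\boldsymbol{w}'')\boldsymbol{w}'+(\boldsymbol{w}\cdot\boldsymbol{w}')\boldsymbol{w}''=\boldsymbol{p}_\sigma$, the coefficient of $\boldsymbol{W}'$ becomes $(\boldsymbol{w}\cdot\boldsymbol{w}'')\boldsymbol{w}-\ell_\sigma^2\boldsymbol{w}''=\boldsymbol{p}'_\sigma$, and the coefficient of $\boldsymbol{W}''$ becomes $(\boldsymbol{w}\cdot\boldsymbol{w}')\boldsymbol{w}-\ell_\sigma^2\boldsymbol{w}'=\boldsymbol{p}''_\sigma$, which is precisely (\ref{eq4}).

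There is no genuine obstacle in this argument: it is a differentiation of the explicit identities of Lemma~\ref{l1} followed by routine vector-algebra bookkeeping. The two points that need a little care are the sign flip coming from $\frac{d}{dt}\cos\varphi_\sigma(t)=-(\sin\varphi_\sigma)\Phi_\sigma$ at $t=0$ — which is exactly why the $\cos$-identity (\ref{eq2}) yields the equation (\ref{eq4}) carrying $\sin\varphi_\sigma$ while the $\sin$-identity (\ref{eq3}) yields (\ref{eq5}) carrying $\cos\varphi_\sigma$ — and the repeated use of the cyclic symmetry of the scalar triple product to extract $\boldsymbol{W}$, $\boldsymbol{W}'$, $\boldsymbol{W}''$ as common factors.
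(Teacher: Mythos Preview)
Your proposal is correct and follows essentially the same approach as the paper: differentiate the identities (\ref{eq2}) and (\ref{eq3}) of Lemma~\ref{l1} at $t=0$, use that $\ell_\sigma,A'_\sigma,A''_\sigma$ are constant along the flex, and then collect the resulting terms against $\boldsymbol{W},\boldsymbol{W}',\boldsymbol{W}''$ (via the cyclic symmetry of the triple product for (\ref{eq5}) and straightforward regrouping for (\ref{eq4})). The only cosmetic difference is that the paper treats (\ref{eq2}) first and (\ref{eq3}) second, whereas you reverse the order.
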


\begin{proof} 
Differentiating the formula (\ref{eq2}) with respect to $t$ when $t=0$ yields
\begin{eqnarray*}
&-&(A'_{\sigma}A''_{\sigma}\sin\varphi_{\sigma})\Phi_{\sigma}\\
& & = \ell^2_{\sigma} (\boldsymbol{W'}\cdot \boldsymbol{w''} + \boldsymbol{w''}\cdot \boldsymbol{W''}) 
- (\boldsymbol{W'}\cdot \boldsymbol{w} + \boldsymbol{w'}\cdot \boldsymbol{W})(\boldsymbol{w''}\cdot \boldsymbol{w}) \\
& & \hphantom{AAAAAAAAAAAAAAAll}
- (\boldsymbol{w'}\cdot \boldsymbol{w})(\boldsymbol{W''}\cdot \boldsymbol{w} + \boldsymbol{w''}\cdot \boldsymbol{W}) \\
& & = -\bigl[(\boldsymbol{w}\cdot \boldsymbol{w''})\boldsymbol{w'} + (\boldsymbol{w}\cdot \boldsymbol{w'})\boldsymbol{w''}
\bigr]\cdot \boldsymbol{W} - \bigl[(\boldsymbol{w}\cdot \boldsymbol{w''})\boldsymbol{w} 
- \ell^2_{\sigma}\boldsymbol{w''}\bigr]\cdot \boldsymbol{W'} \\
& & \hphantom{AAAAAAAAAAAAAAAAAAAAAl}
- \bigl[(\boldsymbol{w}\cdot \boldsymbol{w'})\boldsymbol{w} - \ell^2_{\sigma} \boldsymbol{w'}\bigr]\cdot \boldsymbol{W''}.
\end{eqnarray*}
This formula differs from (\ref{eq4}) by notation only.

Differentiating the formula (\ref{eq3}) with respect to $t$ when $t=0$ yields
\begin{eqnarray*}
(A'_{\sigma}A''_{\sigma}/\ell_{\sigma})(\cos\varphi_{\sigma})\Phi_{\sigma} & = &
- \boldsymbol{W}\cdot (\boldsymbol{w'}\times \boldsymbol{w''}) 
- \boldsymbol{w}\cdot (\boldsymbol{W'}\times \boldsymbol{w''}) - \boldsymbol{w}\cdot (\boldsymbol{w'}\times \boldsymbol{W''}) \\
& = & - (\boldsymbol{w'}\times \boldsymbol{w''})\cdot \boldsymbol{W} 
- (\boldsymbol{w''}\times \boldsymbol{w})\cdot \boldsymbol{W'} - (\boldsymbol{w}\times \boldsymbol{w'})\cdot \boldsymbol{W''}.
\end{eqnarray*}
This formula differs from (\ref{eq5}) by notation only.
\end{proof}

\begin{lemma}\label{l3}
Under the above notation, for every edge $\sigma$ of $P_0$, the equalities hold true
\begin{equation*}
\Phi_{\sigma}=
\begin{cases}
\boldsymbol{r}_{\sigma}\cdot (\boldsymbol{\eta}_{\sigma}-\boldsymbol{\xi}_{\sigma})
+ \boldsymbol{r}_{\sigma}'\cdot (\boldsymbol{\zeta}_{\sigma}'-\boldsymbol{\xi}_{\sigma}) 
+ \boldsymbol{r}_{\sigma}''\cdot (\boldsymbol{\zeta}_{\sigma}''-\boldsymbol{\xi}_{\sigma}), 
& \text{if \  $\sin\varphi_{\sigma}\neq 0$ }, \\
\boldsymbol{s}_{\sigma}\cdot (\boldsymbol{\eta}_{\sigma}-\boldsymbol{\xi}_{\sigma})
+ \boldsymbol{s}_{\sigma}'\cdot (\boldsymbol{\zeta}_{\sigma}'-\boldsymbol{\xi}_{\sigma}) 
+ \boldsymbol{s}_{\sigma}''\cdot (\boldsymbol{\zeta}_{\sigma}''-\boldsymbol{\xi}_{\sigma}), 
& \text{if \ $\cos\varphi_{\sigma}\neq 0$ }, 
\end{cases}
\end{equation*}
where
\begin{eqnarray*}
\boldsymbol{r}_{\sigma} & = & -\frac{\bigl[(\boldsymbol{y}_{\sigma} - \boldsymbol{x}_{\sigma})\cdot 
(\boldsymbol{z}_{\sigma}'' - \boldsymbol{x}_{\sigma})\bigr](\boldsymbol{z}_{\sigma}' - \boldsymbol{x}_{\sigma})
+ \bigl[(\boldsymbol{y}_{\sigma} - \boldsymbol{x}_{\sigma})\cdot (\boldsymbol{z}_{\sigma}' - \boldsymbol{x}_{\sigma})\bigr]
(\boldsymbol{z}_{\sigma}'' - \boldsymbol{x}_{\sigma})}{\ell_{\sigma}(\boldsymbol{y}_{\sigma}-
\boldsymbol{x}_{\sigma})\cdot \bigl[(\boldsymbol{z}_{\sigma}'-\boldsymbol{x}_{\sigma})\times 
(\boldsymbol{z}_{\sigma}''- \boldsymbol{x}_{\sigma}\bigr)]}, \\
\boldsymbol{r}_{\sigma}' & = & -\frac{\bigl[(\boldsymbol{y}_{\sigma} - \boldsymbol{x}_{\sigma})\cdot 
(\boldsymbol{z}_{\sigma}'' - \boldsymbol{x}_{\sigma})\bigr](\boldsymbol{y}_{\sigma} - \boldsymbol{x}_{\sigma})
- \ell^2_{\sigma}(\boldsymbol{z}_{\sigma}'' - \boldsymbol{x}_{\sigma})}{\ell_{\sigma}(\boldsymbol{y}_{\sigma}-
\boldsymbol{x}_{\sigma})\cdot \bigl[(\boldsymbol{z}_{\sigma}'-\boldsymbol{x}_{\sigma})\times 
(\boldsymbol{z}_{\sigma}''-\boldsymbol{x}_{\sigma})\bigr]}, \\
\boldsymbol{r}_{\sigma}'' & = & -\frac{\bigl[(\boldsymbol{y}_{\sigma} - \boldsymbol{x}_{\sigma})\cdot 
(\boldsymbol{z}_{\sigma}' - \boldsymbol{x}_{\sigma})\bigr](\boldsymbol{y}_{\sigma} - \boldsymbol{x}_{\sigma})
- \ell^2_{\sigma}(\boldsymbol{z}_{\sigma}' - \boldsymbol{x}_{\sigma})}{\ell_{\sigma}(\boldsymbol{y}_{\sigma}-
\boldsymbol{x}_{\sigma})\cdot \bigl[(\boldsymbol{z}_{\sigma}'-\boldsymbol{x}_{\sigma})\times 
(\boldsymbol{z}_{\sigma}''-\boldsymbol{x}_{\sigma})\bigr]}, \\
\boldsymbol{s}_{\sigma} & = & - \frac{\ell_{\sigma}}{\theta} (\boldsymbol{z}_{\sigma}' - \boldsymbol{x}_{\sigma})\times 
(\boldsymbol{z}_{\sigma}'' - \boldsymbol{x}_{\sigma}), \\
\boldsymbol{s}_{\sigma}' & = &  \frac{\ell_{\sigma}}{\theta}
(\boldsymbol{y}_{\sigma} - \boldsymbol{x}_{\sigma})\times (\boldsymbol{z}_{\sigma}'' - \boldsymbol{x}_{\sigma}), \\
\boldsymbol{s}_{\sigma}'' & = & - \frac{\ell_{\sigma}}{\theta} (\boldsymbol{y}_{\sigma} - \boldsymbol{x}_{\sigma})\times 
(\boldsymbol{z}_{\sigma}' - \boldsymbol{x}_{\sigma}),
\end{eqnarray*}
and
\begin{equation*}
\theta = \ell^2_{\sigma}(\boldsymbol{z}_{\sigma}'-\boldsymbol{x}_{\sigma})
\cdot(\boldsymbol{z}_{\sigma}''-\boldsymbol{x}_{\sigma})-\bigl[(\boldsymbol{z}_{\sigma}'-\boldsymbol{x}_{\sigma})
\cdot (\boldsymbol{y}_{\sigma}-\boldsymbol{x}_{\sigma})\bigr]
\bigl[(\boldsymbol{z}_{\sigma}''-\boldsymbol{x}_{\sigma})\cdot (\boldsymbol{y}_{\sigma}-\boldsymbol{x}_{\sigma})\bigr].
\end{equation*}
\end{lemma}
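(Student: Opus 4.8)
The plan is to solve for $\Phi_{\sigma}$ directly out of the two identities provided by Lemma \ref{l2}, by dividing through by the relevant non-vanishing coefficient and then re-expressing that coefficient by means of Lemma \ref{l1}. First I would note that, since every face of $P_{0}$ is a non-degenerate triangle, the doubled areas $A'_{\sigma}$ and $A''_{\sigma}$ are strictly positive, so $A'_{\sigma}A''_{\sigma}>0$. Hence the scalar $A'_{\sigma}A''_{\sigma}\sin\varphi_{\sigma}$ is nonzero precisely when $\sin\varphi_{\sigma}\neq 0$, and the scalar $A'_{\sigma}A''_{\sigma}\cos\varphi_{\sigma}$ is nonzero precisely when $\cos\varphi_{\sigma}\neq 0$; since $\sin^{2}\varphi_{\sigma}+\cos^{2}\varphi_{\sigma}=1$, at least one of the two cases always occurs, so the two alternatives in the statement cover all edges.

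In the case $\sin\varphi_{\sigma}\neq 0$, I would divide the identity (\ref{eq4}) of Lemma \ref{l2} by $A'_{\sigma}A''_{\sigma}\sin\varphi_{\sigma}$, which expresses $\Phi_{\sigma}$ as $\frac{\boldsymbol{p}_{\sigma}}{A'_{\sigma}A''_{\sigma}\sin\varphi_{\sigma}}\cdot(\boldsymbol{\eta}_{\sigma}-\boldsymbol{\xi}_{\sigma})$ plus the two analogous terms with $\boldsymbol{p}'_{\sigma}$ and $\boldsymbol{p}''_{\sigma}$. It then remains only to rewrite the common denominator: formula (\ref{eq3}) of Lemma \ref{l1} states exactly that $A'_{\sigma}A''_{\sigma}\sin\varphi_{\sigma}=-\ell_{\sigma}(\boldsymbol{y}_{\sigma}-\boldsymbol{x}_{\sigma})\cdot\bigl[(\boldsymbol{z}'_{\sigma}-\boldsymbol{x}_{\sigma})\times(\boldsymbol{z}''_{\sigma}-\boldsymbol{x}_{\sigma})\bigr]$. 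Substituting this together with the explicit formulas for $\boldsymbol{p}_{\sigma},\boldsymbol{p}'_{\sigma},\boldsymbol{p}''_{\sigma}$ from Lemma \ref{l2} gives precisely the vectors $\boldsymbol{r}_{\sigma},\boldsymbol{r}'_{\sigma},\boldsymbol{r}''_{\sigma}$ displayed in the statement, and hence the first line of the claimed formula for $\Phi_{\sigma}$.

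In the case $\cos\varphi_{\sigma}\neq 0$, I would proceed symmetrically starting from the identity (\ref{eq5}) of Lemma \ref{l2}: dividing by $A'_{\sigma}A''_{\sigma}\cos\varphi_{\sigma}$ presents $\Phi_{\sigma}$ as a sum of three scalar products with coefficients $\boldsymbol{q}_{\sigma}/(A'_{\sigma}A''_{\sigma}\cos\varphi_{\sigma})$, and the like. Now formula (\ref{eq2}) of Lemma \ref{l1} identifies $A'_{\sigma}A''_{\sigma}\cos\varphi_{\sigma}$ with the quantity $\theta$ named in the statement; inserting the expressions for $\boldsymbol{q}_{\sigma},\boldsymbol{q}'_{\sigma},\boldsymbol{q}''_{\sigma}$ from Lemma \ref{l2} yields exactly $\boldsymbol{s}_{\sigma},\boldsymbol{s}'_{\sigma},\boldsymbol{s}''_{\sigma}$, completing the proof. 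The argument is purely a matter of dividing and relabelling; there is no real obstacle, the only point requiring care being the sign bookkeeping when matching the $\boldsymbol{p}$'s against the $\boldsymbol{r}$'s and the $\boldsymbol{q}$'s against the $\boldsymbol{s}$'s.
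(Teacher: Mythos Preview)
Your proposal is correct and follows exactly the approach of the paper, which simply states that Lemma~\ref{l3} follows immediately from Lemmas~\ref{l1} and~\ref{l2}; your write-up merely spells out the division and substitution that this entails, together with the harmless extra remark that the two cases jointly cover every edge.
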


\begin{proof}
Lemma \ref{l3} follows immediately from Lemmas \ref{l1} and \ref{l2}.
\end{proof}

In \cite{GI18}, among other results, A.A. Gaifullin and L.S. Ignashchenko proved that the Dehn invariants of 
any flexible oriented polyhedron with non-degene\-rate triangular faces in $\mathbb{R}^3$ do not alter during a flex.
In \cite {Al19}, this fact was used to prove the following theorem

\begin{thrm}{\cite[Theorem 3]{Al19}}\label{thrm1}
Let $P_0$  be an oriented compact boundary-free flexible polyhedron in $\mathbb{R}^3$
with non-degenerate triangular faces and let $\{P_t\}_{t\in [0,1]}$ be a flex of $P_0$.
Let $\ell_{\sigma}$ be the length of an edge $\sigma$ of $P_0$ and let
$L$ be the $\mathbb{Q}$-linear span of the set $\cup_{\sigma\subset P_0} \{\ell_{\sigma}\}$ in $\mathbb{R}$.
Let $\{\lambda_1,\dots,\lambda_m\}$  be a $\mathbb{Q}$-basis in $L$,  i.\,e., the reals
$\lambda_1,\dots,\lambda_m$ are $\mathbb{Q}$-linearly independent and, for every $\sigma$,
there are $\alpha_{\sigma j}\in\mathbb{Q}$ such that
\begin{equation*}
\ell_{\sigma}=\sum_{j=1}^m \alpha_{\sigma j}\lambda_j.
\end{equation*}
Then, for every $j=1, \dots, m$, the expression 
\begin{equation}\label{eq6}
\sum_{\sigma\subset P_0}\alpha_{\sigma j} \varphi_{\sigma} (t)
\end{equation}
is independent of $t$, i.\,e., it remains unaltered during the flex $\{P_t\}_{t\in [0,1]}$.
In (\ref{eq6}), the summation is taken over all edges $\sigma$ of $P_0$ and 
$\varphi_{\sigma}(t)$ stands for the internal dihedral angle of the polyhedron $P_t$ at the edge $\sigma(t)$.
\end{thrm}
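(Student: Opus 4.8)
The statement is \cite[Theorem 3]{Al19}; my plan is to deduce it by combining two conservation laws for flexible polyhedra. The starting point is the theorem of Gaifullin and Ignashchenko \cite{GI18}: the Dehn invariant
\[
D(P_t)=\sum_{\sigma\subset P_0}\ell_\sigma\otimes\varphi_\sigma(t)\in L\otimes_{\mathbb{Q}}(\mathbb{R}/\pi\mathbb{Q})
\]
is independent of $t$ along the flex $\{P_t\}$ (here $\varphi_\sigma(t)$ is read modulo $\pi\mathbb{Q}$ and the coefficients $\ell_\sigma$ lie in $L$). First I would substitute $\ell_\sigma=\sum_{j=1}^m\alpha_{\sigma j}\lambda_j$ and use $\mathbb{Q}$-bilinearity of $\otimes_{\mathbb{Q}}$, together with $\alpha_{\sigma j}\in\mathbb{Q}$, to rewrite
\[
D(P_t)=\sum_{j=1}^m\lambda_j\otimes f_j(t),\qquad f_j(t):=\sum_{\sigma\subset P_0}\alpha_{\sigma j}\varphi_\sigma(t),
\]
so that $f_j(t)$ is precisely the expression (\ref{eq6}). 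Since $\{\lambda_1,\dots,\lambda_m\}$ is a $\mathbb{Q}$-basis of $L$, one has $L\otimes_{\mathbb{Q}}(\mathbb{R}/\pi\mathbb{Q})=\bigoplus_{j=1}^m\lambda_j\otimes(\mathbb{R}/\pi\mathbb{Q})$, and the $t$-independence of $D(P_t)$ forces each summand to be constant; that is, $f_j(t)-f_j(0)\in\pi\mathbb{Q}$, say $f_j(t)-f_j(0)=\pi q_j(t)$ with $q_j(t)\in\mathbb{Q}$, for every $j$ and every $t$.

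It remains to upgrade ``constant modulo $\pi\mathbb{Q}$'' to ``constant'', and this is the step I expect to be the crux. I would invoke the classical conservation of the total mean curvature of a flexible polyhedron (\cite{Al85}): both $\frac12\sum_{\sigma\subset P_0}\ell_\sigma(\pi-\varphi_\sigma(t))$ and $\sum_{\sigma\subset P_0}\ell_\sigma$ are independent of $t$, hence so is
\[
\sum_{\sigma\subset P_0}\ell_\sigma\varphi_\sigma(t)=\sum_{j=1}^m\lambda_j f_j(t).
\]
Subtracting the value at $t=0$ and using the previous paragraph gives $\sum_{j=1}^m\lambda_j\,\pi\,q_j(t)=0$, i.e.\ $\sum_{j=1}^m\lambda_j q_j(t)=0$ with all $q_j(t)\in\mathbb{Q}$. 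Since $\lambda_1,\dots,\lambda_m$ are $\mathbb{Q}$-linearly independent by hypothesis, this forces $q_j(t)=0$ for all $j$ and $t$, whence $f_j(t)=f_j(0)$; this is the assertion of the theorem.

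Two points would need care. The first is the essential role of the second conservation law: the Dehn invariant by itself only pins down each $f_j$ up to the dense subgroup $\pi\mathbb{Q}$, so no appeal to continuity of $\varphi_\sigma(t)$ in $t$ (although such continuity does hold, since every face of $P_t$ stays congruent to a fixed non-degenerate triangle) could replace the mean-curvature input --- a genuinely $\mathbb{R}$-valued invariant is what converts the $\mathbb{Q}$-linear independence of $\{\lambda_j\}$ into exact constancy. The second is a bookkeeping issue: depending on the precise ambient group in which \cite{GI18} states the result (for instance $\mathbb{R}\otimes_{\mathbb{Z}}(\mathbb{R}/\pi\mathbb{Z})$ rather than $L\otimes_{\mathbb{Q}}(\mathbb{R}/\pi\mathbb{Q})$), one first applies the natural change-of-coefficients homomorphism so that the decomposition into $\lambda_j$-components used above is legitimate. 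Note that Lemmas \ref{l1}--\ref{l3} play no part in this argument; they are used only later, when (\ref{eq6}) is differentiated with respect to $t$.
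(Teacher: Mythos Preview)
The paper does not contain a proof of Theorem~\ref{thrm1}; it is merely quoted from \cite{Al19}, with the single remark that in \cite{Al19} the result is derived from the Gaifullin--Ignashchenko theorem \cite{GI18} on the constancy of the Dehn invariants. There is therefore nothing to compare your proposal against inside this paper.

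That said, your argument is sound and matches the indicated provenance. The two inputs you use --- constancy of the Dehn invariant (giving each $f_j(t)$ constant modulo $\pi\mathbb{Q}$) and constancy of the total mean curvature (giving $\sum_j\lambda_j f_j(t)$ constant in $\mathbb{R}$) --- combine exactly as you say via the $\mathbb{Q}$-linear independence of $\lambda_1,\dots,\lambda_m$. Your handling of the ``bookkeeping issue'' is also correct: since $\mathbb{R}$ is uniquely divisible, $\mathbb{R}\otimes_{\mathbb{Z}}(\mathbb{R}/\pi\mathbb{Z})\cong\mathbb{R}\otimes_{\mathbb{Q}}(\mathbb{R}/\pi\mathbb{Q})$, and the inclusion $L\hookrightarrow\mathbb{R}$ stays injective after tensoring over $\mathbb{Q}$, so the projection onto the $\lambda_j$-components is legitimate. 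Your closing remark that Lemmas~\ref{l1}--\ref{l3} are irrelevant here is accurate; in the paper they are used only to pass from Theorem~\ref{thrm1} to Theorem~\ref{thrm2}.
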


Observe that Theorem \ref{thrm1} implies the following statement:

\begin{thrm}\label{thrm2}
Under the hypothesis of Theorem \ref{thrm1}, for every $j=1, \dots, m$, the equality
\begin{equation}\label{eq7}
\sum_{\sigma\subset P_0}\alpha_{\sigma j}\bigl[\boldsymbol{g}_{\sigma}\cdot (\boldsymbol{\eta}_{\sigma}-\boldsymbol{\xi}_{\sigma})
+ \boldsymbol{g}_{\sigma}'\cdot (\boldsymbol{\zeta}_{\sigma}'-\boldsymbol{\xi}_{\sigma})
+ \boldsymbol{g}_{\sigma}''\cdot (\boldsymbol{\zeta}_{\sigma}''-\boldsymbol{\xi}_{\sigma})\bigr] = 0
\end{equation}
holds true, where
\begin{equation*}
\boldsymbol{g}_{\sigma}=
\begin{cases}
\boldsymbol{r}_{\sigma}, & \text{if \ $\cos\varphi_{\sigma}= 0$}; \\
\boldsymbol{s}_{\sigma}, & \text{if \ $\sin\varphi_{\sigma}= 0$}; \\
\text{either} \ \boldsymbol{r}_{\sigma} \ \text{or} \ \boldsymbol{s}_{\sigma}, & 
\text{if \ $(\cos\varphi_{\sigma})(\sin\varphi_{\sigma})\neq 0$}, 
\end{cases}
\end{equation*}
\begin{equation*}
\boldsymbol{g}_{\sigma}'=
\begin{cases}
\boldsymbol{r}_{\sigma}', & \text{if \ $\cos\varphi_{\sigma}= 0$}; \\
\boldsymbol{s}_{\sigma}', & \text{if \ $\sin\varphi_{\sigma}= 0$}; \\
\text{either} \ \boldsymbol{r}_{\sigma}' \ \text{or} \ \boldsymbol{s}_{\sigma}', & 
\text{if \ $(\cos\varphi_{\sigma})(\sin\varphi_{\sigma})\neq 0$}, 
\end{cases}
\end{equation*}
\begin{equation*}
\boldsymbol{g}_{\sigma}''= 
\begin{cases}
\boldsymbol{r}_{\sigma}'', & \text{if \ $\cos\varphi_{\sigma}= 0$}; \\
\boldsymbol{s}_{\sigma}'', & \text{if \ $\sin\varphi_{\sigma}= 0$}; \\
\text{either} \ \boldsymbol{r}_{\sigma}'' \ \text{or} \ \boldsymbol{s}_{\sigma}'', & 
\text{if \ $(\cos\varphi_{\sigma})(\sin\varphi_{\sigma})\neq 0$}
\end{cases}
\end{equation*}
and the vectors $\boldsymbol{r}_{\sigma}$, $\boldsymbol{r}_{\sigma}'$, $\boldsymbol{r}_{\sigma}''$, 
$\boldsymbol{s}_{\sigma}$, $\boldsymbol{s}_{\sigma}'$, and $\boldsymbol{s}_{\sigma}''$
are defined in Lemma \ref{l3}.
\end{thrm}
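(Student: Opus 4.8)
The plan is to combine Theorem~\ref{thrm1} with Lemma~\ref{l3} by differentiating the conserved quantity at $t=0$. Since the expression $\sum_{\sigma\subset P_0}\alpha_{\sigma j}\varphi_{\sigma}(t)$ in \eqref{eq6} is independent of $t$ by Theorem~\ref{thrm1}, its derivative with respect to $t$ at $t=0$ vanishes. By the very definition of $\Phi_\sigma$ as $\tfrac{d}{dt}\big|_{t=0}\varphi_\sigma(t)$, this gives $\sum_{\sigma\subset P_0}\alpha_{\sigma j}\Phi_\sigma = 0$ for every $j=1,\dots,m$. (One should note that, since the family $\{P_t\}$ is differentiable at $t=0$ and there are finitely many edges, term-by-term differentiation of the finite sum is legitimate.)

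Next I would substitute the expression for $\Phi_\sigma$ supplied by Lemma~\ref{l3}. For each edge $\sigma$ the dihedral angle $\varphi_\sigma$ satisfies $\cos^2\varphi_\sigma+\sin^2\varphi_\sigma=1$, so at least one of $\cos\varphi_\sigma$, $\sin\varphi_\sigma$ is nonzero; hence at least one of the two cases in Lemma~\ref{l3} applies, and $\Phi_\sigma$ admits a representation of the form $\boldsymbol{g}_\sigma\cdot(\boldsymbol{\eta}_\sigma-\boldsymbol{\xi}_\sigma)+\boldsymbol{g}_\sigma'\cdot(\boldsymbol{\zeta}_\sigma'-\boldsymbol{\xi}_\sigma)+\boldsymbol{g}_\sigma''\cdot(\boldsymbol{\zeta}_\sigma''-\boldsymbol{\xi}_\sigma)$, where $(\boldsymbol{g}_\sigma,\boldsymbol{g}_\sigma',\boldsymbol{g}_\sigma'')$ equals $(\boldsymbol{r}_\sigma,\boldsymbol{r}_\sigma',\boldsymbol{r}_\sigma'')$ when $\sin\varphi_\sigma\neq0$ and $(\boldsymbol{s}_\sigma,\boldsymbol{s}_\sigma',\boldsymbol{s}_\sigma'')$ when $\cos\varphi_\sigma\neq0$; when both are nonzero one may freely choose either representation, since both compute the same number $\Phi_\sigma$. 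This is exactly the case distinction encoded in the definition of $\boldsymbol{g}_\sigma$, $\boldsymbol{g}_\sigma'$, $\boldsymbol{g}_\sigma''$ in the statement (the labelling of the three cases there is chosen so that $\boldsymbol{r}$ is used precisely when $\cos\varphi_\sigma=0$, i.e.\ when $\sin\varphi_\sigma\neq0$, and $\boldsymbol{s}$ precisely when $\sin\varphi_\sigma=0$, i.e.\ when $\cos\varphi_\sigma\neq0$). Plugging this into $\sum_{\sigma\subset P_0}\alpha_{\sigma j}\Phi_\sigma=0$ yields \eqref{eq7} immediately.

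There is essentially no obstacle: the proof is a one-line deduction from the two quoted results, and the only thing requiring a word of care is the degenerate-angle bookkeeping — verifying that the piecewise definition of the $\boldsymbol{g}$-vectors is consistent (nonempty domain for each $\sigma$, agreement of the two formulas when both trigonometric values are nonzero) — together with the routine justification that one may differentiate the finite conserved sum term by term. I would state this and then write: the result follows by differentiating \eqref{eq6} at $t=0$ and applying Lemma~\ref{l3}.
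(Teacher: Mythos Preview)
Your proof is correct and follows exactly the same route as the paper's own argument: differentiate the conserved sum \eqref{eq6} at $t=0$ and substitute the expression for $\Phi_\sigma$ from Lemma~\ref{l3}. The extra remarks you add (that at least one of $\sin\varphi_\sigma,\cos\varphi_\sigma$ is nonzero, that the two formulas agree when both apply, and that term-by-term differentiation of a finite sum is legitimate) are harmless clarifications of points the paper leaves implicit.
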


\begin{proof}
Differentiate the expression (\ref{eq6}) with respect to $t$ when $t=0$ and replace
$\frac{d}{dt}\bigl|_{t=0}\varphi_{\sigma} (t)=\Phi_{\sigma}$ by 
a suitable expression obtained in Lemma \ref{l3}.
\end{proof}

Generally speaking, for a given polyhedron $P_0$ we can write several equations (\ref{eq7}), not a single one.
There are several reasons for that.
Firstly, for each $j=1,\dots, m$, we can write a specific equation (\ref{eq7}).
Secondly, for each edge $\sigma$, such that $(\cos\varphi_{\sigma})(\sin\varphi_{\sigma})\neq 0$, 
there are two ways to write each of the coefficients 
$\boldsymbol{g}_{\sigma}$, $\boldsymbol{g}_{\sigma}'$, and $\boldsymbol{g}_{\sigma}''$.
That is, each edge of this type generates eight equations (\ref{eq7}).
In general, these eight equations are independent from each other, but the velocity vectors of
the vertices of a flexible polyhedron $P_0$ must satisfy each of them.

\section{Necessary conditions for the extendibility of a first-order flex of a polyhedron to its flex,
generated by the Dehn invariants}\label{s3}

Let $P$ be an arbitrary polyhedron in $\mathbb{R}^3$, let $\sigma$ be its edge,  
and let $\boldsymbol{x}_{\sigma}$, $\boldsymbol{y}_{\sigma}$, $\boldsymbol{z}_{\sigma}'$, $\boldsymbol{z}_{\sigma}''$ 
be the vertices of $P$, associated with $\sigma$ as described in Section \ref{s2}.
Let us assign arbitrary vectors $\boldsymbol{a}_{\sigma}$, $\boldsymbol{b}_{\sigma}$, $\boldsymbol{c}_{\sigma}'$, and 
$\boldsymbol{c}_{\sigma}''$ 
to the vertices $\boldsymbol{x}_{\sigma}$, $\boldsymbol{y}_{\sigma}$, $\boldsymbol{z}_{\sigma}'$, and 
$\boldsymbol{z}_{\sigma}''$, respectively. 
Note that this notation is overdetermined in the following sense:
a vector, denoted as $\boldsymbol{a}_{\sigma}$, $\boldsymbol{b}_{\sigma}$, $\boldsymbol{c}_{\sigma}'$, or 
$\boldsymbol{c}_{\sigma}''$ for the edge $\sigma$, can get any of the notation
$\boldsymbol{a}_{\tau}$, $\boldsymbol{b}_{\tau}$, $\boldsymbol{c}_{\tau}'$, or 
$\boldsymbol{c}_{\tau}''$ for another edge $\tau$.
Suppose that $P$ is deformed in such a way that the verticies 
$\boldsymbol{x}_{\sigma}$, $\boldsymbol{y}_{\sigma}$, $\boldsymbol{z}_{\sigma}'$, $\boldsymbol{z}_{\sigma}''$ 
move according to the formulas
\begin{equation}\label{eq8}
\boldsymbol{x}_{\sigma}+t\boldsymbol{a}_{\sigma},\quad
\boldsymbol{y}_{\sigma}+t\boldsymbol{b}_{\sigma},\quad
\boldsymbol{z}_{\sigma}'+t\boldsymbol{c}_{\sigma}',\quad
\boldsymbol{z}_{\sigma}''+t\boldsymbol{c}_{\sigma}'', 
\end{equation} 
where $t\in\mathbb{R}$.
In Section \ref{s1}, we called this deformation a first-order flex of $P$
if it changes the length of every edge of $P$ by a value which is $o(t)$ as $t\to 0$.
From now we say that, the set of vectors 
$\cup_{\sigma\subset P}\{\boldsymbol{a}_{\sigma}, \boldsymbol{b}_{\sigma}, 
\boldsymbol{c}_{\sigma}', \boldsymbol{c}_{\sigma}''\}$ is a \textit{first-order flex} of $P$ 
if the deformation (\ref{eq8}) changes the length of every edge of $P$ by a value which is $o(t)$ as $t\to 0$.
Such a switch from deformations to collections of vectors is common in rigidity theory, see, e.\,g.,
\cite{Co93} or \cite{IS95}. 

The condition that the length of $\sigma$ is stationary at $t=0$ for the  first-order flex
$\cup_{\sigma\subset P}\{\boldsymbol{a}_{\sigma}, \boldsymbol{b}_{\sigma}, 
\boldsymbol{c}_{\sigma}', \boldsymbol{c}_{\sigma}''\}$ yields
\begin{eqnarray*}
0 & = & \frac{d}{dt}\biggl|_{t=0}\bigl[(\boldsymbol{y}_{\sigma}+t\boldsymbol{b}_{\sigma}
-\boldsymbol{x}_{\sigma}-t\boldsymbol{a}_{\sigma})\cdot
(\boldsymbol{y}_{\sigma}+t\boldsymbol{b}_{\sigma}-\boldsymbol{x}_{\sigma}-t\boldsymbol{a}_{\sigma})\bigr]\\
& = & \bigl[2(\boldsymbol{y}_{\sigma}-\boldsymbol{x}_{\sigma})\cdot (\boldsymbol{b}_{\sigma}-\boldsymbol{a}_{\sigma}) + 
2t(\boldsymbol{b}_{\sigma}-\boldsymbol{a}_{\sigma})\cdot (\boldsymbol{b}_{\sigma}-\boldsymbol{a}_{\sigma})
\bigr]\bigl|_{t=0}\\
& = & 2(\boldsymbol{y}_{\sigma}-\boldsymbol{x}_{\sigma})\cdot (\boldsymbol{b}_{\sigma}-\boldsymbol{a}_{\sigma}),
\end{eqnarray*}
i.\,e.,
\begin{equation}\label{eq9}
(\boldsymbol{y}_{\sigma}-\boldsymbol{x}_{\sigma})\cdot (\boldsymbol{b}_{\sigma}-\boldsymbol{a}_{\sigma})=0.
\end{equation}
The equation (\ref{eq9}), corresponding to an edge $\sigma$ of $P$,
is called an \textit{equation of the first-order flexes} of $P$.

Denote by $\langle FOF\rangle$ the set of all finite $\mathbb{R}$-linear combinations of the equations (\ref{eq9}). 
This notation reflects the fact that this set is generated by the equations of the first-order flexes.

We say that a first-order flex 
$\cup_{\sigma\subset P}\{\boldsymbol{a}_{\sigma}, \boldsymbol{b}_{\sigma}, 
\boldsymbol{c}_{\sigma}', \boldsymbol{c}_{\sigma}''\}$ of $P$ 
\textit{can be extended to a flex} of $P$ if there is a flex $\{ P_t\}_{t\in [0,1]}$ of 
$P=P_0$, such that if the functions
$\boldsymbol{x}_{\sigma}(t)$, $\boldsymbol{y}_{\sigma}(t)$, $\boldsymbol{z}_{\sigma}'(t)$, $\boldsymbol{z}_{\sigma}''(t)$
represent the motions of the vertices of $P_t$ corresponding to the vertices 
$\boldsymbol{x}_{\sigma}$, $\boldsymbol{y}_{\sigma}$, $\boldsymbol{z}_{\sigma}'$, $\boldsymbol{z}_{\sigma}''$ 
of $P_0$ then
\begin{equation*}
\boldsymbol{a}_{\sigma}=\frac{d}{dt}\biggl|_{t=0}\boldsymbol{x}_{\sigma}(t), \biggr. \ \ 
\boldsymbol{b}_{\sigma}=\frac{d}{dt}\biggl|_{t=0}\boldsymbol{y}_{\sigma}(t), \biggr. \ \ 
\boldsymbol{c}_{\sigma}'=\frac{d}{dt}\biggl|_{t=0} \boldsymbol{z}_{\sigma}'(t) \biggr. , \ \ 
\boldsymbol{c}_{\sigma}''=\frac{d}{dt}\biggl|_{t=0} \boldsymbol{z}_{\sigma}''(t). \biggr.
\end{equation*}

Let us observe that Theorem \ref{thrm2} implies the following statement:

\begin{thrm}\label{thrm3}
Let $P$  be an oriented compact boundary-free flexible polyhedron in $\mathbb{R}^3$
with non-degenerate triangular faces.
Let $\ell_{\sigma}$ be the length of an edge $\sigma$ of $P$ and let
$L$ be the $\mathbb{Q}$-linear span of the set $\cup_{\sigma\subset P} \{\ell_{\sigma}\}$ in $\mathbb{R}$.
Let $\{\lambda_1,\dots,\lambda_m\}$  be a $\mathbb{Q}$-basis in $L$,  i.\,e., the reals
$\lambda_1,\dots,\lambda_m$ are $\mathbb{Q}$-linearly independent and, for every $\sigma$,
there are $\alpha_{\sigma j}\in\mathbb{Q}$ such that
\begin{equation*}
\ell_{\sigma}=\sum_{j=1}^m \alpha_{\sigma j}\lambda_j.
\end{equation*}
Let $\cup_{\sigma\subset P}\{\boldsymbol{a}_{\sigma}, \boldsymbol{b}_{\sigma}, 
\boldsymbol{c}_{\sigma}', \boldsymbol{c}_{\sigma}''\}$ be a first-order flex of $P$
which can be extended to a flex of $P$. 
Then, for every $j=1, \dots, m$, the equality
\begin{equation}\label{eq10}
\sum_{\sigma\subset P}\alpha_{\sigma j}\bigl[\boldsymbol{g}_{\sigma}\cdot (\boldsymbol{b}_{\sigma}-\boldsymbol{a}_{\sigma})
+ \boldsymbol{g}_{\sigma}'\cdot (\boldsymbol{c}_{\sigma}'-\boldsymbol{a}_{\sigma})
+ \boldsymbol{g}_{\sigma}''\cdot (\boldsymbol{c}_{\sigma}''-\boldsymbol{a}_{\sigma})\bigr] = 0
\end{equation}
holds true, where the vectors
$\boldsymbol{g}_{\sigma}$, $\boldsymbol{g}_{\sigma}'$, $\boldsymbol{g}_{\sigma}''$
were defined in Theorem \ref{thrm2}.
\end{thrm}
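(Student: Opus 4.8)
The plan is to reduce Theorem \ref{thrm3} directly to Theorem \ref{thrm2}, the point being that ``can be extended to a flex'' is precisely the hypothesis that lets us identify the a priori given collection of vectors with the velocity field of an honest flex. First I would invoke this hypothesis: by the definition stated just before Theorem \ref{thrm3}, the fact that the first-order flex $\cup_{\sigma\subset P}\{\boldsymbol{a}_{\sigma}, \boldsymbol{b}_{\sigma}, \boldsymbol{c}_{\sigma}', \boldsymbol{c}_{\sigma}''\}$ extends to a flex of $P$ furnishes a flex $\{P_t\}_{t\in[0,1]}$ of $P=P_0$ together with motions $\boldsymbol{x}_{\sigma}(t)$, $\boldsymbol{y}_{\sigma}(t)$, $\boldsymbol{z}_{\sigma}'(t)$, $\boldsymbol{z}_{\sigma}''(t)$ of the vertices associated with $\sigma$ whose derivatives at $t=0$ are exactly $\boldsymbol{a}_{\sigma}$, $\boldsymbol{b}_{\sigma}$, $\boldsymbol{c}_{\sigma}'$, $\boldsymbol{c}_{\sigma}''$. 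In the notation of Section \ref{s2} attached to this flex this says $\boldsymbol{\xi}_{\sigma}=\boldsymbol{a}_{\sigma}$, $\boldsymbol{\eta}_{\sigma}=\boldsymbol{b}_{\sigma}$, $\boldsymbol{\zeta}_{\sigma}'=\boldsymbol{c}_{\sigma}'$, $\boldsymbol{\zeta}_{\sigma}''=\boldsymbol{c}_{\sigma}''$, hence $\boldsymbol{\eta}_{\sigma}-\boldsymbol{\xi}_{\sigma}=\boldsymbol{b}_{\sigma}-\boldsymbol{a}_{\sigma}$, $\boldsymbol{\zeta}_{\sigma}'-\boldsymbol{\xi}_{\sigma}=\boldsymbol{c}_{\sigma}'-\boldsymbol{a}_{\sigma}$, and $\boldsymbol{\zeta}_{\sigma}''-\boldsymbol{\xi}_{\sigma}=\boldsymbol{c}_{\sigma}''-\boldsymbol{a}_{\sigma}$.

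Next I would check that the remaining hypotheses of Theorem \ref{thrm2} (equivalently, of Theorem \ref{thrm1}) are met with $P_0=P$ and this particular flex: $P$ is assumed oriented, compact, boundary-free, flexible, with non-degenerate triangular faces, and $\{\lambda_1,\dots,\lambda_m\}$ together with the rationals $\alpha_{\sigma j}$ are chosen exactly as prescribed, so Theorem \ref{thrm2} applies and yields, for each $j=1,\dots,m$, the identity (\ref{eq7}) for this flex. I would then remark that the auxiliary vectors $\boldsymbol{g}_{\sigma}$, $\boldsymbol{g}_{\sigma}'$, $\boldsymbol{g}_{\sigma}''$ --- in particular, the choice, for each edge $\sigma$ with $(\cos\varphi_{\sigma})(\sin\varphi_{\sigma})\neq 0$, between the $\boldsymbol{r}$-expressions and the $\boldsymbol{s}$-expressions of Lemma \ref{l3} --- depend only on $P_0=P$ and not on the flex, so the very same vectors occur in (\ref{eq7}) and in (\ref{eq10}) (and whatever selection is fixed in the statement of Theorem \ref{thrm2} is inherited here). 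Substituting the equalities of the previous paragraph into (\ref{eq7}) then turns it verbatim into (\ref{eq10}), which completes the argument.

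I do not anticipate a genuine obstacle here: the mathematical content sits entirely in Theorems \ref{thrm1}--\ref{thrm2} (ultimately in the Gaifullin--Ignashchenko invariance of the Dehn invariants and in Lemmas \ref{l1}--\ref{l3}), and what remains is bookkeeping. The only point deserving a word of care is the overdetermined labelling noted in Section \ref{s3} --- a vector called $\boldsymbol{a}_{\sigma}$ for one edge may be $\boldsymbol{b}_{\tau}$ or $\boldsymbol{c}_{\tau}'$ for another --- but this causes no trouble, since for the extending flex the velocity is a well-defined function of the vertex, so the identification $\boldsymbol{\xi}_{\sigma}=\boldsymbol{a}_{\sigma}$, etc., is consistent across all edges; and equation (\ref{eq9}) for every edge, which is built into being a first-order flex, is automatically reproduced by differentiating the (constant) edge lengths of $\{P_t\}$ at $t=0$, so no compatibility issue arises on that side either.
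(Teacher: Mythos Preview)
Your proposal is correct and is essentially the same argument the paper gives: the paper's own proof is the one-line observation that the result ``immediately follows from Theorem \ref{thrm2} and from the definition of a first-order flex which can be extended to a flex,'' and you have spelled out precisely that identification $\boldsymbol{\xi}_{\sigma}=\boldsymbol{a}_{\sigma}$, $\boldsymbol{\eta}_{\sigma}=\boldsymbol{b}_{\sigma}$, $\boldsymbol{\zeta}_{\sigma}'=\boldsymbol{c}_{\sigma}'$, $\boldsymbol{\zeta}_{\sigma}''=\boldsymbol{c}_{\sigma}''$ and the substitution of (\ref{eq7}) into (\ref{eq10}). Your extra remarks about the overdetermined labelling and the consistency of the velocity field across edges are more detail than the paper supplies, but they add no new ingredient.
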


\begin{proof}
Immediately follows from Theorem \ref{thrm2} and from the definition of a first-order flex 
which can be extended to a flex.
\end{proof}

Denote by $\langle DI\rangle$ the set of all finite $\mathbb{R}$-linear combinations of the equations (\ref{eq10}). 
This notation reflects the fact that this set is generated by the Dehn invariants.

In our opinion, the set $\langle DI\rangle$ deserves deep study.
The following problems are especially interesting:

(i) How many new (compared to (\ref{eq9})) equations are contained among the equations (\ref{eq10}), 
i.\,e., how much the dimension of the linear span of the set $\langle DI\rangle\cup\langle FOF\rangle$ 
is greater than  the dimension of the linear space $\langle FOF\rangle$?

(ii) Characterize all $n\geqslant 2$ such that the equations (\ref{eq10}) are not a consequence of the 
equations of the $n$th order flex.

In \cite {St99}, Hellmuth Stachel constructed  explicit examples of octahedra in $\mathbb{R}^3$ which
admit the first- and second-order flexes as well as explicit necessary and
sufficient conditions for an octahedron to be $n$th-order infinitesimally flexible for $n<8$,
provided the octahedron under consideration is not totally flat.
These results may be useful in the study of the problems (i) and (ii). 
But we left such a study for the future.

In Example 1 below, we answer the simplest problem similar to the problems (i) and (ii).
Namely, we show that, there are a polyhedron and its edge $\sigma$ such that, the equation (\ref{eq10}),
written for $\sigma$, is linearly independent of all equations (\ref{eq9}).

\textbf{Example 1}. Let $Q$ be a polyhedron in $\mathbb{R}^3$ shown in Fig.~\ref{fig2}.
\begin{figure}
\begin{center}
\includegraphics[width=0.2\textwidth]{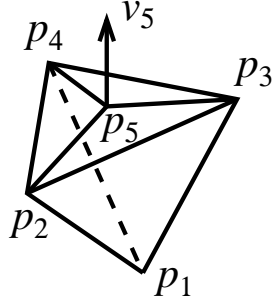}
\end{center}
\caption{Polyhedron $Q$ and the vector $\boldsymbol{v}_5$ which is the only non-zero vector among the vectors 
$\boldsymbol{v}_i$, $i=1,2,3,4,5$.}\label{fig2}
\end{figure}
In other words, let $Q$ be a triangular bipyramid with vertices $\boldsymbol{p}_i$, $i=1,\dots, 5$.
Suppose that $\boldsymbol{p}_1$, $\boldsymbol{p}_2$, $\boldsymbol{p}_3$, $\boldsymbol{p}_4$
form a regular tetrahedron  with edge length 8 and $\boldsymbol{p}_5$ lies inside the face 
$\boldsymbol{p}_2$, $\boldsymbol{p}_3$, $\boldsymbol{p}_4$ in such a manner that the length of each of the 
two edges with the endpoints $\boldsymbol{p}_3$, $\boldsymbol{p}_5$ and $\boldsymbol{p}_4$, $\boldsymbol{p}_5$ 
is equal to 5.
For convenience, we assume that 
\begin{multline*}
\boldsymbol{p}_1 =  \biggl(\frac{4}{3}\sqrt{3}-3, 0, -\frac{8}{3}\sqrt{6}\biggr),\quad
\boldsymbol{p}_2 =  (4\sqrt{3}-3, 0, 0),\\
\boldsymbol{p}_3 =  (-3, 4, 0),\quad
\boldsymbol{p}_4 =  (-3,-4, 0),\quad
\boldsymbol{p}_5 =  (0, 0, 0).
\end{multline*}

Let us associate the velocity vector $\boldsymbol{v}_i$ to each of the vectors  $\boldsymbol{p}_i$, $i=1,\dots,5$,
and put by definition $\boldsymbol{v}_5 = (0, 0, 1)$ and $\boldsymbol{v}_i=(0,0,0)$ for every  $i=1,2,3,4$.

Since $\boldsymbol{v}_5$ is orthogonal to the plane containing the vertices $\boldsymbol{p}_i$, $i=2,3,4,5$,
then $(\boldsymbol{p}_i-\boldsymbol{p}_5)\cdot\boldsymbol{v}_5=0$ for every $i=2,3,4$. 
These formulas differ from (\ref{eq9}) by notation only.
Hence, the vectors $\boldsymbol{v}_i$, $i=1,2,3,4,5$, constitute the first-order flex of $Q$.

Now let us verify that the vectors $\boldsymbol{v}_i$, $i=1,2,3,4,5$, do not satisfy the equation (\ref{eq10}), 
written for the edge with the end-points $\boldsymbol{p}_2$, $\boldsymbol{p}_5$.
Denote this egde by $\sigma$. Then
$\boldsymbol{x}_{\sigma}=\boldsymbol{p}_2$, $\boldsymbol{y}_{\sigma}=\boldsymbol{p}_5$, 
$\boldsymbol{z}_{\sigma}'=\boldsymbol{p}_4$, $\boldsymbol{z}_{\sigma}''=\boldsymbol{p}_3$, 
$\boldsymbol{n}_{\sigma}'=\boldsymbol{n}_{\sigma}''=(0,0,1)$, 
$\boldsymbol{\xi}_{\sigma}=\boldsymbol{\zeta}_{\sigma}'=\boldsymbol{\zeta}_{\sigma}''=(0,0,0)$,
$\boldsymbol{\eta}_{\sigma}=\boldsymbol{v}_5=(0,0,1)$, $\ell_{\sigma}=4\sqrt{3}-3$, $\ell'_{\sigma}=\ell''_{\sigma}=8$.

By definition, put $\lambda_1=1$ and $\lambda_2=4\sqrt{3}-3$.
Since the length of any edge of $Q$ is equal to 5, 8, or $4\sqrt{3}-3$, we conclude that the set
$\{\lambda_1, \lambda_2\}$ is a $\mathbb{Q}$-basis in the $\mathbb{Q}$-linear span of the set of all edge-lengths of $Q$.
Hence, $\alpha_{\sigma 1}=0$, $\alpha_{\sigma 2}=1$ and the equation  (\ref{eq10}) takes the form
\begin{equation}\label{eq11}
\boldsymbol{g}_{\sigma}\cdot\boldsymbol{v}_5 =0,
\end{equation}
where $\boldsymbol{g}_{\sigma} = \boldsymbol{s}_{\sigma}$ since $\sin\varphi_{\sigma}=0$.
Calculating $\boldsymbol{s}_{\sigma}$ according to a suitable formula from Lemma \ref{l3}, we get
\begin{equation*}
\boldsymbol{g}_{\sigma}= \boldsymbol{s}_{\sigma}=\biggl(0,0,\frac{128-6\sqrt{3}}{313} \biggr).
\end{equation*}

Since $\boldsymbol{v}_5=(0,0,1)$, we conclude that the equality (\ref{eq11}) is not satisfied.
Thus, the equation (\ref{eq10}) corresponding to the above chosen edge $\sigma$ is not satisfied.

Hence, for $Q$, one of the equations (\ref{eq10}) is not a linear combination of the equations 
(\ref{eq9}). This completes Example 1.

\section{Necessary conditions for the extendibility of a first-order flex of a polyhedron to its flex,
generated by the rigidity matrix}\label{s4}

In Sections \ref{s2} and \ref{s3} we have shown that the Dehn invariants imply the equations (\ref{eq10}) 
which are necessary conditions for a given first-order flex of a polyhedron to be extendable to a flex.
In this Section we show that the rigidity matrix also implies a finite set of linearly independent equations
which are necessary conditions for a given first-order flex of a polyhedron to be extendable to a flex.
 
First we fix the notation which will differ slightly from the notation used in Sections \ref{s2} and \ref{s3}.

Let $P_0$ be a flexible polyhedron in $\mathbb{R}^3$ with triangular faces. 
(Note that in this Section we do not exclude the cases when $P_0$ is non-orientable or 
some of its faces are degenerated.) Suppose $P_0$ has $V$ vertices.
Let us enumerate them in an arbitrary order and denote by 
$\boldsymbol{p}_i=(x_i, y_i, z_i)\in\mathbb{R}^3$, $i=1,\dots,V$.
Let $\{ P_t \}_{t\in [0,1]}$ be a flex of $P_0$.
Generally speaking, the vertices of the polyhedron $P_t$ are functions of $t$.
Denote them by $\boldsymbol{p}_i(t)=\bigl(x_i(t), y_i(t), z_i(t)\bigr)$, $i=1,\dots,V$.
Moreover, we will assume that $\boldsymbol{p}_i(0)=\boldsymbol{p}_i$.

Let $\boldsymbol{v}_i(t)=\bigl(v_{i,1}(t), v_{i,2}(t), v_{i,3}(t)\bigr)\in\mathbb{R}^3$ 
be the velocity vector of the vertex $\boldsymbol{v}_i(t)$, i.\,e., put by definition
\begin{equation}\label{eq12}
\boldsymbol{v}_i(t)=\frac{d}{dt}\boldsymbol{p}_i(t), \ \ 
v_{i,1}(t)=\frac{d}{dt}x_i(t), \ \ 
v_{i,2}(t)=\frac{d}{dt}y_i(t), \ \ 
v_{i,3}(t)=\frac{d}{dt}z_i(t).
\end{equation}

If the vertices $\boldsymbol{v}_i$ and $\boldsymbol{v}_j$ of $P_0$ are joint with each other by an edge
then the length of the corresponding (by continuity) edge of $P_t$ is independent of $t$, namely,
\begin{eqnarray*}
\bigl(x_i(t)-x_j(t)\bigr)^2  +  \bigl(y_i(t)-y_j(t)\bigr)^2 & + & \bigl(z_i(t)-z_j(t)\bigr)^2 \\
& = & (x_i-x_j)^2 + (y_i-y_j)^2 + (z_i-z_j)^2.
\end{eqnarray*}
Differentiating the last relation in $t$ we get
\begin{multline}\label{eq13}
\bigl(x_i(t)-x_j(t)\bigr)\bigl(v_{i,1}(t)-v_{j,1}(t)\bigr) 
+ \bigl(y_i(t)-y_j(t)\bigr)\bigl(v_{i,2}(t)-v_{j,2}(t)\bigr) \\
+ \bigl(z_i(t)-z_j(t)\bigl)\bigr(v_{i,3}(t)-v_{j,3}(t)\bigr) = 0.
\end{multline}

For a fixed $t$, the equations (\ref{eq13}) are linear with respect to $3V$ variables $v_{i,k}(t)$, $i=1,\dots,V$, $k=1,2,3$.
Jointly, the equations (\ref{eq13}) form a homogeneous system of linear algebraic equations.
The matrix of this system is called the \textit{rigidity matrix} of the polyhedron $P_t$.
It has $3V$ columns and $E$ rows, where $E$ stands for the number of edges of $P_0$.

\begin{lemma}\label{l4}
Let $P_0$ be a flexible polyhedron in $\mathbb{R}^3$ with triangular faces and $V$ vertices
and let $\{ P_t \}_{t\in [0,1]}$ be a flex of $P_0$.
Then, for every $t\in [0,1]$, every $(3V-7)\times (3V-7)$ minor of the rigidity matrix of 
the polyhedron $P_t$ is equal to zero.
\end{lemma}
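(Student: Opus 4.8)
The plan is to exploit the fact that a flex of $P_0$ gives, for every $t$, a nonzero solution of the homogeneous system (\ref{eq13}) which is moreover \emph{not} accounted for by the trivial (rigid-motion) solutions, forcing the rank of the rigidity matrix to drop below $3V-6$. First I would recall that the space of \emph{trivial} first-order flexes — the restrictions to the vertices of the Killing fields $\boldsymbol{v}(\boldsymbol{p}) = \boldsymbol{c} + \boldsymbol{d}\times \boldsymbol{p}$ of $\mathbb{R}^3$ — always lies in the kernel of the rigidity matrix and has dimension $6$, provided the vertices of $P_0$ do not all lie on a single line (which holds here since $P_0$ is a genuine polyhedron with triangular faces, hence affinely spans $\mathbb{R}^3$). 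Consequently the kernel of the rigidity matrix of $P_t$ has dimension at least $6$ for every $t$, i.e. its rank is at most $3V-6$, so every $(3V-6)\times(3V-6)$ minor already vanishes. The point of the lemma is to push this one step further, from $3V-6$ to $3V-7$, using flexibility.

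The key step is to produce, for each $t$, a \emph{seventh} linearly independent element of the kernel. Here one differentiates the motion $t\mapsto \boldsymbol{p}_i(t)$: the velocity vectors $\boldsymbol{v}_i(t)$ defined in (\ref{eq12}) satisfy (\ref{eq13}) by construction, so $\boldsymbol{v}(t) := \bigl(\boldsymbol{v}_1(t),\dots,\boldsymbol{v}_V(t)\bigr)$ lies in the kernel. I would then argue that $\boldsymbol{v}(t)$ is, for every $t$ (after possibly normalizing the family $\{P_t\}$ so that it moves at every instant — e.g. reparametrizing so the flex is nonconstant on every subinterval, or passing to an analytic representative, which is harmless since the statement is local in $t$ and a flex may be taken real-analytic on a dense open set, hence by continuity the minor, being a polynomial in the entries, vanishes everywhere once it vanishes on a dense set), \emph{not} a trivial flex: if $\boldsymbol{v}(t_0)$ were the restriction of a Killing field, then near $t_0$ the deformation $P_t$ would agree to first order with a rigid motion, contradicting that $P_t$ is never congruent to $P_0$ — more carefully, one uses that a flex, being a curve in the (finite-dimensional) configuration space cut out by the edge-length equations, has nowhere-vanishing velocity transverse to the orbit of the rigid-motion group, which is precisely the statement that $\boldsymbol{v}(t)$ is not tangent to that orbit, i.e. not a trivial flex. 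Adjoining $\boldsymbol{v}(t)$ to a basis of the $6$-dimensional space of trivial flexes yields a $7$-dimensional subspace of the kernel, whence $\operatorname{rank}$ of the rigidity matrix of $P_t$ is at most $3V-7$, and therefore every $(3V-7)\times(3V-7)$ minor vanishes.

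I expect the main obstacle to be the \emph{rigorous} justification that $\boldsymbol{v}(t)$ is linearly independent from the trivial flexes for \emph{every} $t\in[0,1]$ rather than merely for generic $t$. A flex is only assumed continuous, so $\boldsymbol{v}(t)$ need not exist classically at every point; and even where it exists, one must rule out that the flex momentarily "freezes" up to an infinitesimal rigid motion. The clean way around this, which I would adopt, is the density argument sketched above: the function $t\mapsto \bigl(\text{a given }(3V-7)\times(3V-7)\text{ minor}\bigr)$ is continuous in $t$ (its entries vary continuously with the vertices, which vary continuously with $t$), so it suffices to prove it vanishes on a dense subset of $[0,1]$; on the dense open set where $\{P_t\}$ is smooth and genuinely moving (not locally constant modulo congruence), the argument of the previous paragraph applies verbatim, and continuity then propagates the vanishing to all of $[0,1]$. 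A secondary, purely bookkeeping point is to confirm the count: $3V$ columns minus a $7$-dimensional kernel leaves rank $\le 3V-7$, and a matrix of rank $\le r$ has all $(r+1)\times(r+1)$ minors zero — so in fact one gets the vanishing of every $(3V-7)\times(3V-7)$ minor, which is exactly the asserted conclusion. Here one must also note $3V-7 \ge 1$, i.e. $V\ge 3$, which is automatic for a polyhedron.
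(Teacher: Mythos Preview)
Your overall strategy coincides with the paper's: for each $t$, exhibit a nontrivial first-order flex of $P_t$, conclude that the kernel of its rigidity matrix has dimension at least $7$, and translate this into a rank bound. The paper packages these steps as citations to standard rigidity-theory facts --- ``every flexible polyhedron is infinitesimally flexible'' and the characterization of infinitesimal flexibility via minors --- applied to $P_t$ (which is itself flexible, being an interior point of the flex). You instead argue directly by differentiating the flex; you correctly flag the regularity issue (a merely continuous flex need not be differentiable everywhere, and could in principle ``pause'' modulo congruence at some~$t$) and propose to resolve it by continuity of the minor combined with density of the good $t$'s. Both routes are acceptable; the paper's is shorter, yours more self-contained.

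There is, however, a bookkeeping error in your final step --- and the paper's own proof contains the very same slip. From a $7$-dimensional kernel you correctly get $\operatorname{rank}\le 3V-7$. But your own stated principle, ``a matrix of rank $\le r$ has all $(r{+}1)\times(r{+}1)$ minors zero,'' applied with $r=3V-7$, yields only the vanishing of all $(3V-6)\times(3V-6)$ minors, \emph{not} the $(3V-7)\times(3V-7)$ minors asserted in the lemma. To kill the $(3V-7)\times(3V-7)$ minors one would need $\operatorname{rank}\le 3V-8$, i.e.\ an $8$-dimensional kernel, which flexibility alone does not furnish: at a generic point of a one-parameter flex the kernel has dimension exactly $7$, so some $(3V-7)\times(3V-7)$ minor is nonzero (e.g.\ already for a Bricard octahedron, $V=6$). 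Thus neither your argument nor the paper's establishes the lemma as literally stated; both establish the correct version with $3V-6$ in place of $3V-7$. Your proof, with that index corrected, is complete and matches the paper's.
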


\begin{proof}
Denote by $F$ the number of faces of $P_t$. According to Euler's formula, 
\begin{equation}\label{eq14}
V-E+F=\chi
=\begin{cases}
2-2g, & \text{if \  $P_t$ \ is orientable;} \\
 2-k, & \text{if \ $P_t$ \ is non-orientable.}
\end{cases}
\end{equation} 
Here $\chi$ is the Euler characteristic of $P_t$, $g\geqslant 0$ is genus of $P_t$ (i.\,e.,
the number of tori in a connected sum decomposition of $P_t$),  and $k\geqslant 0$ is non-orientable genus of $P_t$
(i.\,e., the number of real projective planes in a connected sum decomposition of $P_t$).
Since $P_t$ has triangular faces only,  $3F=2E$.
Substituting the latter relation to (\ref{eq14}), we obtain $E=3V-3\chi\geqslant 3V-6$. 
This inequality implies that the set of $(3V-6)\times (3V-6)$ minors of the rigidity matrix of $P_t$ is non-empty.

Since the isometry group of $\mathbb{R}^3$ has dimension 6, the kernel of the rigidity matrix of 
any polyhedron with triangular faces has dimension greater than or equal to 6.
Therefore, every $(3V-6)\times (3V-6)$ minor of the rigidity matrix of $P_t$ is equal to zero.

Recall that 

(a) the first-order flex of a polyhedron is called \textit{nontrivial} if it is not
generated by a continuous family of isometries of $\mathbb{R}^3$ or, equivalently,
if there are two vertices of the polyhedron (which are not connected by an edge) 
such that the Euclidean distance between these vertices is not stationary;

(b) a polyhedron is called \textit{infinitesimally flexible}, if it admits a nontrivial first-order flex;

(c) a polyhedron in $\mathbb{R}^3$ with triangular faces and $V$ vertices is infinitesimally flexible 
if and only if every $(3V-7)\times (3V-7)$ minor of its rigidity matrix is equal to zero;

(d) every flexible polyhedron is infinitesimally flexible.

These definitions and statements are standard in rigidity theory.
The reader can find them, for example, in \cite[Theorem 4.1]{Gl75} (for the simplest case of sphere-homeomorphic polyhedra) 
or in \cite[Theorem 3.1]{Co93} and \cite{CW96} (for a more general case of tensegrity frameworks).

Now we a ready to finalize the proof of Lemma \ref{l4}.
Obviously, $P_t$ is flexible for every $t\in [0,1]$. According to (d), it is infinitesimally flexible.
Then, (c) implies that every $(3V-7)\times (3V-7)$ minor of the rigidity matrix of $P_t$ is equal to zero.
\end{proof}

As before, suppose that $P_0$ is a flexible polyhedron in $\mathbb{R}^3$ with triangular faces and $V$ vertices,
$\{ P_t \}_{t\in [0,1]}$ is a flex of $P_0$, and $x_1(t)$, $y_1(t)$, $z_1(t)$, \dots, $x_V(t)$, $y_V(t)$, $z_V(t)$
stand for the coordinates of the vertices of $P_t$.
Then, for every $t\in [0,1]$, every  $(3V-7)\times (3V-7)$ minor $\Delta(t)$ of the rigidity matrix of $P_t$ is 
a homogeneous polynomail in the variables
$x_1(t)$, $y_1(t)$, $z_1(t)$, \dots, $x_V(t)$, $y_V(t)$, $z_V(t)$.
Moreover, according to Lemma \ref{l4}, this polynomial is equal to zero if the values of these variables 
correspond to $P_t$.
Differentiating $\Delta(t)$ with respect to $t$ when $t=0$ and replacing the derivatives of the variables
$x_1(t)$, $y_1(t)$, $z_1(t)$, \dots, $x_V(t)$, $y_V(t)$, $z_V(t)$ 
according to the formulas (\ref{eq12}), yields that some finite linear combination of the vectors
$\boldsymbol{v}_1(0)$, $\boldsymbol{v}_2(0)$, \dots, $\boldsymbol{v}_V(0)$
is equal to zero.
The coefficients of this linear combination are homogeneous polynomials in the variables
$x_1(0)$, $y_1(0)$, $z_1(0)$, \dots, $x_V(0)$, $y_V(0)$, $z_V(0)$.
Each such linear combination is a new (compared with the stationarity condition of the edge lengths)
relation that should be satisfied by every first-order flex
$\boldsymbol{v}_1(0)$, $\boldsymbol{v}_2(0)$, \dots, $\boldsymbol{v}_V(0)$ of $P_0$ 
which can be extended to a flex $\{ P_t \}_{t\in [0,1]}$.

Let us list a few problems related to the above constructed new linear relations for the first-order flexes:

(A) Is it true that the stationarity of all $(3V-7)\times (3V-7)$ minors of the rigidity matrix is a 
consequence of the stationarity of the edge lengths?

(B) How many new linearly independent relations are generated by $(3V-7)\times (3V-7)$ minors of 
the rigidity matrix of a given infinitesimally flexible polyhedron?

(C) How these new relations are related to the possibility of extension of a given first-order flex
to an $n$th order flex for $n\geqslant 2$?

(D) Do these new relations imply that the volume bounded by a flexible polyhedorn is stationary?
(It is known that the stationarity of the edge lengths only does not imply the stationarity of the
volume, see, e.\,g., \cite{Al10}).

In Example 2, we make the first step in the study of the problems (A)--(D).
Namely, we give a negative answer to the problem (A).

\textbf{Example 2}. Let $Q$ be the polyhedron with $V=5$ vertices constructed in Example 1.
Let us make sure that at least one linear relation for the first-order flexes obtained by differentiating 
the $(3V-7)\times (3V-7)$ minors of the rigidity matrix of $Q$ is not a consequence of
the equations expressing the stationarity of the edge lengths of $Q$.

To do this, we consider a first-order flex $\cup_{i=1}^5\{\boldsymbol{v}_i\}$ of $Q$
such that  $\boldsymbol{v}_i=0$ for every $i=1,\dots, 4$ and describe all vectors
$\boldsymbol{v}_5$ such that all $(3V-7)\times (3V-7)$ minors of the rigidity matrix of $Q$ are stationary 
at $t=0$ for the deformation 
\begin{equation}\label{eq15}
t\mapsto \boldsymbol{p}_i+t\boldsymbol{v}_i, \quad t\in\mathbb{R}, \quad i=1,\dots, 5.
\end{equation}

Obviously, the vector $\boldsymbol{v}_5$ must lie in the plane passing through the vertices
$\boldsymbol{p}_2$, $\boldsymbol{p}_3$, $\boldsymbol{p}_4$ of $Q$.
This follows from the fact that every polyhedron $Q'$ with the vertices $\boldsymbol{p}_i'$, $i=1,\dots,5$, 
which has the same combinatorial structure as $Q$ and is sufficiently close to $Q$
(i.\,e., such that the length of each of the vectors $\boldsymbol{p}_i'-\boldsymbol{p}_i$, $i=1,\dots,5$,
does not exceed a sufficiently small number $\varepsilon>0$) 
is infinitezimally rigid if and only if the vertex $\boldsymbol{p}_5'$ lies in the plane passing through the vertices
$\boldsymbol{p}_2'$, $\boldsymbol{p}_3'$, $\boldsymbol{p}_4'$.

Since $\cup_{i=1}^5\{\boldsymbol{v}_i\}$ is a first-order flex of $Q$, 
$\boldsymbol{v}_5$ is orthogonal to the plane passing through the vertices $\boldsymbol{p}_2$, $\boldsymbol{p}_3$, 
$\boldsymbol{p}_4$ of $Q$. 
For $\boldsymbol{v}_5\neq 0$, this implies that $\boldsymbol{v}_5$ does not lie in this plane
and, thus, some of the $(3V-7)\times (3V-7)$ minors of the rigidity matrix of $Q$ is not stationary under the deformation
(\ref{eq15}) at $t=0$.
  
\bibliographystyle{plain}
\bibliography{alex_nce_bib}
\end{document}